\numberwithin{equation}{section}
\newtheorem{theorem}{Theorem}[section]
\newtheorem{definition}[theorem]{Definition}
\newtheorem{remark}[theorem]{Remark}
\newtheorem{lemma}[theorem]{Lemma}
\newtheorem{example}[theorem]{Example}
\newtheorem{corollary}[theorem]{Corollary}
\newcommand{\edge}{\ar@{-}}
\newcommand{\pf}{\noindent\begin {proof}}
\newcommand{\epf}{\end{proof}}
\newcommand{\Ext}{\mbox{\rm Ext}}
\newcommand{\Hom}{\mbox{\rm Hom}}
\def\Im{\mathop{\rm Im}\nolimits}
\def\Ker{\mathop{\rm Ker}\nolimits}
\def\Coker{\mathop{\rm Coker}\nolimits}
\def\La{\mathop{\rm \Lambda}\nolimits}
\def\mod{\mathop{\rm mod}\nolimits}
\def\id{\mathop{\rm id}\nolimits}
\def\pd{\mathop{\rm pd}\nolimits}
\def\max{\mathop{\rm max}\nolimits}
\def\min{\mathop{\rm min}\nolimits}
\def\sup{\mathop{\rm sup}\nolimits}
\def\inf{\mathop{\rm inf}\nolimits}
\def\add{\mathop{\rm add}\nolimits}
\def\gldim{\mathop{\rm gl.dim}\nolimits}
\def\rad{\mathop{{\rm rad}}\nolimits}
\def\top{\mathop{{\rm top}}\nolimits}
\def\dim{\mathop{\rm dim}\nolimits}
\def\Hom{\mathop{\rm Hom}\nolimits}
\def\Ext{\mathop{\rm Ext}\nolimits}
\def\sup{\mathop{\rm sup}\nolimits}
\def\lim{\mathop{\underrightarrow{\rm lim}}\nolimits}
\def\Ext{\mathop{\rm Ext}\nolimits}
\def\End{\mathop{\rm End}\nolimits}
\def\mod{\mathop{\rm mod}\nolimits}
\def\id{\mathop{\rm id}\nolimits}
\def\pd{\mathop{\rm pd}\nolimits}
\def\max{\mathop{\rm max}\nolimits}
\def\min{\mathop{\rm min}\nolimits}
\def\sup{\mathop{\rm sup}\nolimits}
\def\inf{\mathop{\rm inf}\nolimits}
\def\add{\mathop{\rm add}\nolimits}
\def\gldim{\mathop{\rm gl.dim}\nolimits}
\def\rad{\mathop{{\rm rad}}\nolimits}
\def\top{\mathop{{\rm top}}\nolimits}
\def\dim{\mathop{\rm dim}\nolimits}
\def\extdim{\mathop{\rm ext.dim}\nolimits}
\def\derdim{\mathop{\rm der.dim}\nolimits}
\def\Hom{\mathop{\rm Hom}\nolimits}
\def\Ext{\mathop{\rm Ext}\nolimits}
\def\sup{\mathop{\rm sup}\nolimits}
\def\lim{\mathop{\underrightarrow{\rm lim}}\nolimits}
\def\Ext{\mathop{\rm Ext}\nolimits}
\def\End{\mathop{\rm End}\nolimits}
\def\repdim{\mathop{\rm rep.dim}\nolimits}
\def\A{\mathop{\rm \mathcal{A}}\nolimits}
\def\B{\mathop{\rm \mathcal{B}}\nolimits}
\def\C{\mathop{\rm \mathcal{C}}\nolimits}
\def\V{\mathop{\rm \mathcal{V}}\nolimits}
\def\T{\mathop{\rm \mathcal{T}}\nolimits}
\def\I{\mathop{\rm \mathcal{I}}\nolimits}
\def\X{\mathop{\rm \mathcal{X}}\nolimits}
\def\LL{\mathop{\rm LL}\nolimits}
\title{ \bf The Derived and Extension Dimensions of Abelian Categories
\thanks{2020 Mathematics Subject Classification: 18G20, 16E10, 18E10.}
\thanks{Keywords: Derived dimension, Extension dimension, Abelian Categories, Radical layer length, Finite type,
(Co)resolving subcategories, Relative projective dimension, Relative injective dimension. }}
\author { \ Junling  Zheng$^a$, Zhaoyong Huang$^{b,}$\thanks{E-mail addresses: zjlshuxue@163.com (J. Zheng), huangzy@nju.edu.cn (Z. Huang)}
\thanks{Corresponding author.}
\\
{\it \scriptsize  $^a$ Department of Mathematics, China Jiliang University, Hangzhou 310018, Zhejiang Province, P. R. China;
}\\{\it \scriptsize $^b$ Department of Mathematics, Nanjing University,
Nanjing 210093, Jiangsu Province, P.R. China }}
\date{ }
\begin{document}

\baselineskip=16pt


\maketitle

\begin{abstract}
For an abelian category $\mathcal{A}$, we establish the relation between
its derived and extension dimensions.
Then for an artin algebra $\Lambda$, we give the upper bounds of
the extension dimension of $\Lambda$ in terms of the radical layer length of $\Lambda$ and certain
relative projective (or injective) dimension of some simple $\Lambda$-modules,
from which some new upper bounds of the derived dimension of $\Lambda$ are induced.
\end{abstract}

\pagestyle{myheadings}
\markboth{\rightline {\scriptsize  J. L. Zheng, Z. Y. Huang\emph{}}}
         {\leftline{\scriptsize  The Derived and Extension Dimensions of Abelian Categories }}


\section{Introduction} 
Given a triangulated category $\mathcal{T}$, Rouquier introduced in
\cite{rouquier2006representation,rouquier2008dimensions}
the dimension $\dim\mathcal{T}$ of $\mathcal{T}$
under the idea of Bondal and van den Bergh in \cite{bondal2003generators}.
This dimension and the infimum of the Orlov spectrum of $\mathcal{T}$
coincide, see \cite{ballard2012orlov,orlov2009remarks}. Roughly speaking,
it is an invariant that measures how quickly the category can be built from one object.
This dimension plays
an important role in representation theory. For example,
it can be used to compute the representation dimension of
artin algebras (\cite{rouquier2006representation,oppermann2009lower}).
Many authors have studied the upper bound of $\dim \mathcal{T}$, see
\cite{ballard2012orlov,bergh2015gorenstein,chen2008algebras,han2009derived,oppermann2012generating,
rouquier2006representation,rouquier2008dimensions,zheng2017upper} and so on.

There are a lot of triangulated categories having infinite dimension;
for instance, Oppermann and \v St'ov\'\i \v cek proved in \cite{oppermann2012generating} that
all proper thick subcategories of the bounded derived category of finitely generated modules over a Noetherian algebra
containing perfect complexes have infinite dimension.
Let $\Lambda$ be an artin algebra and $\mod \Lambda$ the category of finitely generated right $\Lambda$-modules.
It was proved in \cite[Propositions 7.37 and 7.4]{rouquier2008dimensions} that the
dimension of the bounded derived category $D^{b}(\mod \Lambda)$ is at most $\min\{\LL(\Lambda)-1,\gldim\Lambda\}$,
where $\LL(\Lambda)$ and $\gldim\Lambda$ are the Loewy length and global dimension of $\Lambda$ respectively.

As an analogue of the dimension of triangulated categories, the (extension) dimension $\extdim\A$ of an abelian category $\A$
was introduced by Beligiannis in \cite{beligiannis2008some}, also see \cite{dao2014radius}. Let $\Lambda$ be an artin algebra. Note that
the representation dimension of $\Lambda$ is at most two (that is, $\Lambda$ is of finite representation type) if and only if
$\extdim\mod \Lambda=0$ (\cite{beligiannis2008some}). So, like the representation dimension of $\Lambda$, the extension dimension
$\extdim\mod \Lambda$ is also an invariant that measures how far $\Lambda$ is from of finite representation type.
It was proved in \cite{beligiannis2008some,zheng2019extension} that $\extdim\mod \Lambda\leqslant\min\{\LL(\Lambda)-1,\gldim\Lambda\}$,
which is a counterpart of the above result of Roquier.
In \cite{zheng2017upper,zheng2019extension}, we obtained many upper bounds of $\dim D^{b}(\mod \Lambda)$ and $\extdim\mod \Lambda$
in terms of the radical layer length of $\Lambda$ and the projective (or injective) dimension of some simple $\Lambda$-modules,
such that the upper bounds $\LL(\Lambda)-1$ and $\gldim\Lambda$ are special cases.

In this paper, for an abelian category $\mathcal{A}$, we establish the relation between the dimensions of $\mathcal{A}$ and
the bounded derived category $D^{b}(\mathcal{A})$ of $\mathcal{A}$.
Then for an artin algebra $\Lambda$, we give the upper bounds of $\extdim\mod \Lambda$
in terms of the radical layer length of $\Lambda$ and certain relative projective (or injective) dimension of some simple $\Lambda$-modules,
from which some new upper bounds of $\dim D^{b}(\mod \Lambda)$ are induced. The paper is organized as follows.

In Section 2, we give some terminology and some preliminary results.

Let $\mathcal{A}$ be an abelian category. The dimensions of $D^{b}(\mathcal{A})$ and $\mathcal{A}$ are usually called
the derived and extension dimensions of $\mathcal{A}$, and denoted by $\derdim\mathcal{A}$ and $\extdim\mathcal{A}$ respectively.
In Section 3, we get that $\derdim\mathcal{A}\leqslant 2\extdim\mathcal{A}+1$ (Theorem \ref{thm-3.3}).
Let $\Lambda$ be an artin algebra and $\mathcal{V}$ a certain class of simple $\Lambda$-modules. Then for a subcategory $\mathcal{X}$
of $\mod\Lambda$ of finite type, we give an upper bound of
$\extdim\mod\Lambda$ in terms of the $\mathcal{X}$-projective (or $\mathcal{X}$-injective) dimension of $\mathcal{V}$ and the radical layer
length of $\Lambda$ (Theorem \ref{thm-3.12}). Combining this result with Theorem \ref{thm-3.3}, we get
some new upper bounds of $\derdim \mod\Lambda$ (Theorem \ref{thm-3.18}).

In Section 4, we give two examples to illustrate that in some cases, the upper bounds obtained in this paper are more precise,
even arbitrarily smaller, than that in the literature known so far, and that we may obtain the exact value of the derived dimension
of some certain algebras.

\section{Preliminaries}

Throughout this paper, $\mathcal{A}$ is an abelian category and all subcategories of $\mathcal{A}$ involved
are full, additive and closed under isomorphisms and direct summands, and all functors between categories are additive.
For a subclass $\mathcal{U}$ of $\mathcal{A}$, we use $\add \mathcal{U}$ to
denote the subcategory of $\mathcal{A}$ consisting of direct summands of finite direct sums of objects
in $\mathcal{U}$.

\subsection{The extension dimension of an abelian category}

Let $\mathcal{U}_1,\mathcal{U}_2,\cdots,\mathcal{U}_n$ be subcategories of $\mathcal{A}$. Define
$$\mathcal{U}_1\bullet \mathcal{U}_2:={\add}\{A\in \mathcal{A}\mid {\rm there \;exists \;an\;exact\; sequence \;}
0\rightarrow U_1\rightarrow  A \rightarrow U_2\rightarrow 0$$
$$\ \ \ \ \ \ \ \ \ \ \ \  \ \ \ \ \ \ \ \ \ \ \ \ \ \ \ \ \
\ \ \ \ \ \ \ \ \ \ \ \  \ \ \ \ \ \ \ \ \ \ \ \ \ \ \ \ \
{\rm in}\ \mathcal{A}\ {\rm with}\; U_1 \in \mathcal{U}_1 \;{\rm and}\;
U_2 \in \mathcal{U}_2\}.$$
For any subcategories $\mathcal{U},\mathcal{V}$ and $\mathcal{W}$ of $\mathcal{A}$, by \cite[Proposition 2.2]{dao2014radius} we have
$$(\mathcal{U}\bullet\mathcal{V})\bullet\mathcal{W}=\mathcal{U}\bullet(\mathcal{V}\bullet\mathcal{W}).$$
Inductively, we define
\begin{align*}
\mathcal{U}_{1}\bullet  \mathcal{U}_{2}\bullet \dots \bullet\mathcal{U}_{n}:=
\add \{A\in \mathcal{A}\mid {\rm there \;exists \;an\;exact\; sequence}\
0\rightarrow U\rightarrow  A \rightarrow V\rightarrow 0  \\
{\rm in}\ \mathcal{A}\ {\rm with}\; U \in \mathcal{U}_{1} \;{\rm and}\;
V \in  \mathcal{U}_{2}\bullet \dots \bullet\mathcal{U}_{n}\}.
\end{align*}
For a subcategory $\mathcal{U}$ of $\mathcal{A}$, set
$[\mathcal{U}]_{0}=0$, $[\mathcal{U}]_{1}=\add\mathcal{U}$,
$[\mathcal{U}]_{n}=[\mathcal{U}]_1\bullet [\mathcal{U}]_{n-1}$ for any $n\geqslant 2$,
and $[\mathcal{U}]_{\infty}=\mathop{\bigcup}_{n\geqslant 0}[\mathcal{U}]_{n}$ (\cite{beligiannis2008some}).

\begin{definition}\label{def-2.1}
{\rm (\cite{beligiannis2008some})
The {\bf extension dimension} $\extdim \mathcal{A}$ of $\mathcal{A}$ is defined to be
$$\extdim{\mathcal{A}}:=\inf\{n\geqslant 0\mid\mathcal{A}=[A]_{n+1}\ {\rm with}\ A\in\mathcal{A}\},$$
or $\infty$ if no such an integer exists.}
\end{definition}


The following lemma is used frequently in the sequel.

\begin{lemma}\label{lem-2.2} {\rm (\cite[Corollary 2.3(1)]{zheng2019extension})}
For any $A_{1},A_{2}\in \mathcal{A}$ and $m,n\geqslant 1$, we have
$$[A_{1}]_{m}\bullet [A_{2}]_{n}\subseteq [A_{1}\oplus A_{2}]_{m+n}.$$
\end{lemma}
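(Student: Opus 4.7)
The plan is to reduce the lemma to two ingredients: a monotonicity property of the operation $\bullet$, and the associativity identity already recalled in the excerpt. Both observations are elementary, but they need to be composed carefully.

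First I would verify monotonicity of $\bullet$: if $\mathcal{U}\subseteq \mathcal{U}'$ and $\mathcal{V}\subseteq \mathcal{V}'$ are subcategories of $\mathcal{A}$, then $\mathcal{U}\bullet \mathcal{V}\subseteq \mathcal{U}'\bullet \mathcal{V}'$. This is immediate from the definition, because any short exact sequence witnessing an object of $\mathcal{U}\bullet\mathcal{V}$ also witnesses it as an object of $\mathcal{U}'\bullet\mathcal{V}'$, and taking $\add$-closure preserves the inclusion. As an easy consequence, for $i\in\{1,2\}$ and any $k\geq 1$ one obtains
\[
[A_i]_k\subseteq [A_1\oplus A_2]_k.
\]
The case $k=1$ holds because every direct summand of a finite direct sum of copies of $A_i$ is also a direct summand of a finite direct sum of copies of $A_1\oplus A_2$; the case $k\geq 2$ then follows by induction on $k$ using the defining identity $[\mathcal{U}]_k=[\mathcal{U}]_1\bullet[\mathcal{U}]_{k-1}$ together with monotonicity.

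Setting $B:=A_1\oplus A_2$, I would next establish the ``additivity in subscripts'' relation $[B]_m\bullet [B]_n=[B]_{m+n}$. Using the associativity of $\bullet$ (three at a time) together with an induction on $m$, both sides unfold as the $(m+n)$-fold iterated $\bullet$-product of $[B]_1$ with itself, so they coincide. Combining the pieces gives
\[
[A_1]_m\bullet [A_2]_n \;\subseteq\; [B]_m\bullet [B]_n \;=\; [B]_{m+n} \;=\; [A_1\oplus A_2]_{m+n},
\]
which is the desired inclusion.

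The main obstacle is essentially bookkeeping: one must transport the $\add$-closure consistently through all the inductive steps and verify that repeatedly applying the three-term associativity is legitimate inside an $(m+n)$-fold product. No deeper difficulty arises; the lemma is really the composition of the simple monotonicity fact with the associativity/iteration of $\bullet$.
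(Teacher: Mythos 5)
Your argument is correct: monotonicity of $\bullet$ gives $[A_i]_k\subseteq[A_1\oplus A_2]_k$, and associativity gives $[B]_m\bullet[B]_n=[B]_{m+n}$, which together yield the inclusion. The paper itself only cites this lemma from \cite[Corollary 2.3(1)]{zheng2019extension}, and your route (monotonicity plus iteration/associativity of $\bullet$) is essentially the same elementary argument used there, so there is nothing to add.
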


\subsection{The dimension of a triangulated category}

Let $\T$ be a triangulated category and $\I \subseteq {\rm Ob}\T$.
Let $\langle \I \rangle_{1}$ be the full subcategory of $\T$ consisting
of all direct summands of finite direct sums of shifts of objects in $\I$.
Given two subclasses $\I_{1}, \I_{2}\subseteq {\rm Ob}\T$, we use $\I_{1}*\I_{2}$
to denote the full subcategory of all extensions between them, that is,
$$\I_{1}*\I_{2}=\{ X\mid\text{there exists a distinguished triangle}\
X_{1} \longrightarrow X \longrightarrow X_{2}\longrightarrow X_{1}[1]$$
$$\ \ \ \ \ \ \ \ \ \ \ \ \ \ \ \ \ \ \ \ \ \ \ \ \ \ \ \ \ \ \ \ \ \
\ \ \ \ \ \ \ \ \ \ \ \ \ \ \ \ \ \ \ \ \ \ \ \
{\rm in}\; \T\; {\rm with}\; X_{1}\in \I_{1}\; {\rm and}\; X_{2}\in \I_{2}\}.$$
We write $\I_{1}\diamond\I_{2}:=\langle\I_{1}*\I_{2} \rangle_{1}.$
Then for any subclasses $\I_{1}, \I_{2}$ and $\I_{3}$ of $\T$, we have
$$(\I_{1}\diamond\I_{2})\diamond\I_{3}=\I_{1}\diamond(\I_{2}\diamond\I_{3})$$
by the octahedral axiom. In addition, we write
\begin{align*}
\langle \I \rangle_{0}:=0,\;
\langle \I \rangle_{1}:=\langle \I \rangle\; {\rm and}\;
\langle \I \rangle_{n+1}:=\langle \I \rangle_{n}\diamond\langle \I \rangle_{1}\;{\rm for\; any \;}n\geqslant 1.
\end{align*}

\begin{definition}\label{def-2.3}
{\rm (\cite{oppermann2007lower1, oppermann2009lower, rouquier2006representation})
\begin{itemize}
\item[$(1)$] The {\bf dimension} $\dim \T$ of a triangulated category $\T$ is defined to be
$$\dim{\mathcal{T}}:=\inf\{n\geqslant 0\mid\mathcal{T}=\langle T\rangle_{n+1}\ \text{\rm for some}\ T\in\mathcal{T}\},$$
or $\infty$ if no such an integer exists.
\item[$(2)$] For a subcategory $\C$ of $\T$, the {\bf dimension} of $\C$ is defined to be
$$\dim_{\T}\C:=\inf\{n\geqslant 0\mid \C \subseteq\langle T\rangle_{n+1}\ \text{\rm for some}\ T\in\mathcal{T}\},$$
or $\infty$ if no such an integer exists.
\item[$(3)$] For an abelian category $\mathcal{A}$, the bounded derived category $D^b(\mathcal{A})$ of
$\mathcal{A}$ is a triangulated category. We call $\derdim \mathcal{A}:=\dim D^b(\mathcal{A})$
the {\bf derived dimension} of $\mathcal{A}$.
\end{itemize}}
\end{definition}

The following lemma is an analogue of Lemma \ref{lem-2.2}.

\begin{lemma}\label{lem-2.4} {\rm (\cite[Lemma 7.3]{psaroudakis2014homological})}
For any $T_{1},T_{2}\in \mathcal{T}$ and $m,n\geqslant 1$, we have
$$\langle T_{1} \rangle _{m}\diamond \langle T_{2} \rangle _{n} \subseteq \langle T_{1}\oplus T_{2} \rangle _{m+n}.$$
\end{lemma}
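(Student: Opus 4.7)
The plan is to mimic the strategy used for Lemma \ref{lem-2.2} in the abelian setting, replacing $\bullet$ by $\diamond$ and using the associativity of $\diamond$ granted by the octahedral axiom. First I would set $T := T_1 \oplus T_2$ and establish a simple monotonicity statement: if $\I_1 \subseteq \I_1'$ and $\I_2 \subseteq \I_2'$, then $\I_1 \diamond \I_2 \subseteq \I_1' \diamond \I_2'$. Both steps are immediate from the definitions of $*$ and $\langle-\rangle_1$, but they are needed to combine the two ingredients below.

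Next I would prove, for each $i \in \{1,2\}$ and every $k \geqslant 1$, the inclusion $\langle T_i \rangle_k \subseteq \langle T \rangle_k$. For $k=1$, every shift of $T_i$ is a direct summand of the corresponding shift of $T$, and direct summands of finite direct sums are inherited, so $\langle T_i\rangle_1 \subseteq \langle T \rangle_1$. For the inductive step, use $\langle T_i \rangle_k = \langle T_i \rangle_{k-1} \diamond \langle T_i \rangle_1$ together with the monotonicity of $\diamond$ established above.

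The core combinatorial step is to show $\langle T \rangle_m \diamond \langle T \rangle_n = \langle T \rangle_{m+n}$. I would prove this by induction on $n$, fixing $m$. The base $n=1$ is the recursive definition $\langle T \rangle_{m+1} = \langle T \rangle_m \diamond \langle T \rangle_1$. For the inductive step,
\[
\langle T \rangle_m \diamond \langle T \rangle_n
= \langle T \rangle_m \diamond \bigl(\langle T \rangle_{n-1} \diamond \langle T \rangle_1\bigr)
= \bigl(\langle T \rangle_m \diamond \langle T \rangle_{n-1}\bigr) \diamond \langle T \rangle_1
= \langle T \rangle_{m+n-1} \diamond \langle T \rangle_1
= \langle T \rangle_{m+n},
\]
using the associativity of $\diamond$ stated before Definition \ref{def-2.3} and the induction hypothesis.

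Combining the two pieces gives
\[
\langle T_1 \rangle_m \diamond \langle T_2 \rangle_n
\subseteq \langle T \rangle_m \diamond \langle T \rangle_n
= \langle T \rangle_{m+n} = \langle T_1 \oplus T_2 \rangle_{m+n},
\]
which is the desired inclusion. The only conceptual point is the associativity of $\diamond$; once that is in hand (and it is cited just before the definition of $\dim\T$), nothing else is delicate, so I expect no serious obstacle. The argument is genuinely parallel to the proof of Lemma \ref{lem-2.2}, with the short exact sequence closure replaced by the distinguished triangle closure.
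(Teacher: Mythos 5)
Your argument is correct and complete. The paper itself gives no proof of this lemma --- it is quoted verbatim from \cite[Lemma 7.3]{psaroudakis2014homological} --- so there is no in-paper argument to compare against; what you have written is the standard proof, and each of your three ingredients (monotonicity of $\diamond$ in both arguments, $\langle T_i\rangle_k\subseteq\langle T_1\oplus T_2\rangle_k$ by induction from the summand-closure of $\langle-\rangle_1$, and $\langle T\rangle_m\diamond\langle T\rangle_n=\langle T\rangle_{m+n}$ by induction on $n$) checks out. The only external input is the associativity of $\diamond$, which the paper asserts just before Definition \ref{def-2.3}, so taking it as given is legitimate here.
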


\subsection{ Radical layer lengths and torsion pairs}

We recall some notions from \cite{huard2009finitistic}.
Let $\mathcal{C}$ be a {\bf length-category}, that is, $\mathcal{C}$
is an abelian, skeletally small category and every object of $\mathcal{C}$ has a finite composition series.
We use $\End_{\mathbb{Z}}(\mathcal{C})$ to denote the category of all additive functors from
$\mathcal{C}$ to $\mathcal{C}$, and use $\rad$ to denote the Jacobson radical lying in $\End_{\mathbb{Z}}(\mathcal{C})$.
For any $\alpha\in\End_{\mathbb{Z}}(\mathcal{C})$, set the {\bf $\alpha$-radical functor} $F_{\alpha}:=\rad\circ \alpha$.

\begin{definition}\label{def-2.5} {\rm (\cite[Definition 3.1]{huard2013layer})
For any $\alpha, \beta \in \End_{\mathbb{Z}}(\mathcal{C})$, we define
the {\bf $(\alpha,\beta)$-layer length} $\ell\ell_{\alpha}^{\beta}:\mathcal{C} \longrightarrow \mathbb{N}\cup \{\infty\}$ via
$\ell\ell_{\alpha}^{\beta}(M)=\inf\{ i \geqslant 0\mid \alpha \circ \beta^{i}(M)=0 \}$; and
the {\bf $\alpha$-radical layer length} $\ell\ell^{\alpha}:=\ell\ell_{\alpha}^{F_{\alpha}}$.}
\end{definition}

\begin{lemma}\label{lem-2.6} {\rm (\cite[Lemma 2.6]{zheng2017upper})}
Let $\alpha,\beta \in\End_{\mathbb{Z}}(\mathcal{C}) $.
For any $M\in \mathcal{C}$, if $\ell\ell_{\alpha}^{\beta}(M)=n$, then $\ell\ell_{\alpha}^{\beta}(M)=\ell\ell_{\alpha}^{\beta}(\beta^{j}(M))+j$
for any $0 \leqslant j\leqslant n$; in particular, if $\ell\ell^{\alpha}(M)=n$, then $\ell\ell^{\alpha}(F_{\alpha}^{n}(M))=0$.
\end{lemma}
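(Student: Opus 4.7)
The plan is to proceed directly from the definition of $\ell\ell_{\alpha}^{\beta}$. Recall that
$$\ell\ell_{\alpha}^{\beta}(M)=\inf\{ i \geqslant 0\mid \alpha \circ \beta^{i}(M)=0 \},$$
and the assumption $\ell\ell_{\alpha}^{\beta}(M)=n$ says that $n$ is the smallest nonnegative integer with $\alpha \circ \beta^{n}(M)=0$. The key algebraic observation is that, since $\alpha,\beta\in \End_{\mathbb Z}(\mathcal C)$ are functors, for any $j\geqslant 0$ and any object $N$ of $\mathcal C$ we have the identity $\beta^{i}(\beta^{j}(N))=\beta^{i+j}(N)$, and hence
$$\alpha\circ\beta^{i}(\beta^{j}(M))=\alpha\circ\beta^{i+j}(M).$$

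First I would fix $0\leqslant j\leqslant n$ and set $N:=\beta^{j}(M)$. Using the displayed identity, I would rewrite
$$\ell\ell_{\alpha}^{\beta}(N)=\inf\{i\geqslant 0\mid \alpha\circ\beta^{i+j}(M)=0\}=\inf\{k-j\mid k\geqslant j,\ \alpha\circ\beta^{k}(M)=0\}.$$
Because $\ell\ell_{\alpha}^{\beta}(M)=n\geqslant j$, the smallest $k\geqslant j$ with $\alpha\circ\beta^{k}(M)=0$ is exactly $n$ (no smaller $k\geqslant j$ works by minimality of $n$, and $k=n$ itself works by definition). Therefore $\ell\ell_{\alpha}^{\beta}(N)=n-j$, which is precisely the first claim.

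For the second assertion I would apply the first part in the special case $\beta=F_{\alpha}$, so that $\ell\ell_{\alpha}^{\beta}=\ell\ell^{\alpha}$. Taking $j=n$ gives $\ell\ell^{\alpha}(F_{\alpha}^{n}(M))=n-n=0$, as required.

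I do not anticipate a genuine obstacle here; the only point that warrants care is the boundary condition $j\leqslant n$, which ensures that the candidate set for the infimum defining $\ell\ell_{\alpha}^{\beta}(\beta^{j}(M))$ is nonempty (it contains $n-j$) and that no smaller value can be attained, both of which follow immediately from the minimality built into $\ell\ell_{\alpha}^{\beta}(M)=n$.
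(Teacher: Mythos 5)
Your argument is correct: the identity $\alpha\circ\beta^{i}(\beta^{j}(M))=\alpha\circ\beta^{i+j}(M)$ together with the minimality of $n$ immediately gives $\ell\ell_{\alpha}^{\beta}(\beta^{j}(M))=n-j$ for $0\leqslant j\leqslant n$, and the special case $\beta=F_{\alpha}$, $j=n$ yields the second assertion. The paper itself quotes this lemma from \cite[Lemma 2.6]{zheng2017upper} without reproducing a proof, and the cited proof is the same direct unwinding of the definition that you give, so there is nothing further to reconcile.
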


Recall that a {\bf torsion pair} (or {\bf torsion theory}) for $\mathcal{C}$
is a pair of classes $(\mathcal{T},\mathcal{F})$ of objects in $\mathcal{C}$ satisfying the following conditions.
\begin{enumerate}
\item[(1)] $\Hom_{\mathcal{C}}(M,N)=0$ for any $M\in\mathcal{T}$ and $N\in\mathcal{F}$;
\item[(2)] an object $X \in \mathcal{C}$ is in $\mathcal{T}$ if $\Hom_{\mathcal{C}}(X,-)|_{\mathcal{F}}=0$;
\item[(3)] an object $Y\in\mathcal{C}$ is in $\mathcal{F}$ if $\Hom_{\mathcal{C}}(-,Y)|_{\mathcal{T}}=0$.
\end{enumerate}

For a subfunctor $\alpha$ of the identity functor $1_{\C}$, we write $q_{\alpha}:=1_{\mathcal{C}}/\alpha$.
Let $(\mathcal{T},\mathcal{F})$ be a torsion pair for $\mathcal{C}$.
Recall that the {\bf torsion radical} $t$ is a functor in $\End_{\mathbb{Z}}(\mathcal{C})$ such that
$$0 \longrightarrow  t(M)\longrightarrow M \longrightarrow  q_{t}(M)\longrightarrow 0$$
is a short exact sequence and $q_{t}(M)=M/t(M)\in \mathcal{F}$.

\subsection{Homologically finite subcategories}

Let $\Lambda$ be an artin algebra and $\mod\Lambda$ the category of finitely generated right $\Lambda$-modules.
Let $M, N\in\mod \Lambda$. Recall that a homomorphism $f: N \to M$ in $\mod \Lambda$ is called {\bf right minimal}
if every $h \in \End(N_{\Lambda})$ such that $fh=f$ is an automorphism. Let $\X$ be a subcategory of $\mod\Lambda$ and
$M \in \mod\Lambda$. A homomorphism $f: X\to M$ in $\mod \Lambda$ is called a {\bf right $\X$-approximation}
of $M$ if $X\in \mathcal{X}$ and the sequence $\Hom_{\Lambda}(X',f)$ is epic for any $X'\in\mathcal{X}$.
The category $\X$ is called a {\bf contravariantly finite subcategory} of $\mod \Lambda$ if each module in $\mod \Lambda$
admits a right $\mathcal{X}$-approximation. Dually, ({\bf minimal}) {\bf left $\X$-approximations} and {\bf covariantly finite
subcategories} are defined (\cite{AuslanderApplications}).
If $f: X\to M$ in $\mod \Lambda$ is a minimal right $\mathcal{X}$-approximation of $M$ and $n\geqslant 1$, then we
write $\Omega^{1}_{\mathcal{X}}(M):=\Ker f$ and $\Omega^{n}_{\X}(M):=\Omega_{\X}^1(\Omega^{n-1}_{\X}(M))$.
Dually, if $f: M\to X$ in $\mod \Lambda$ is a minimal left $\mathcal{X}$-approximation of $M$, then we
write $\Omega^{-1}_{\mathcal{X}}(M):=\Coker f$ and $\Omega^{-n}_{\X}(M):=\Omega_{\X}^{-1}(\Omega^{-(n-1)}_{\X}(M))$.
In particular, $\Omega^{0}_{\X}(M):=M$.

Recall that a subcategory $\X$ of $\mod \Lambda$ is called {\bf resolving} if $\X$ contains all projective modules
in $\mod\Lambda$, and $\X$ is closed under extensions and kernels of epimorphisms; and $\X$
is called {\bf coresolving} if $\X$ contains all injective modules
in $\mod\Lambda$, and $\X$ is closed under extensions and cokernels of monomorphisms.

Let $M\in\mod \Lambda$. If $\X$ is a contravariantly finite and resolving subcategory of $\mod \Lambda$,
then there exists an exact sequence
$$\cdots\rightarrow X_{n}\xrightarrow{f_{n}} X_{n-1}\rightarrow\cdots
\rightarrow X_{1}\xrightarrow{f_{1}}X_{0}\xrightarrow{f_{0}} M\rightarrow 0$$
in $\mod \Lambda$ such that each $X_{i}\rightarrow \Im f_{i}$ is a (minimal) right $\X$-approximation of $\Im f_i$.
In this case, we call this exact sequence a ({\bf minimal}) {\bf $\X$-resolution} of $M$.

Let $\X$ be a subcategory of $\mod\Lambda$ and $M\in\mod\Lambda$.
If $\X$ is contravariantly finite and resolving, then the {\bf $\X$-projective dimension} $\pd_{\X}M$
of $M$ is defined as $\inf\{n\mid \Omega^{n}_{\X}(M)\in\X\}$,
and set $\pd_{\X}M=\infty$ if no such an integer exists. Dually,
if $\X$ is covariantly finite and coresolving, then the
{\bf $\X$-injective dimension} $\id_{\X}M$ of $M$ is defined as $\inf\{n\mid \Omega^{-n}_{\X}(M)\in\X\}$,
and set $\id_{\X}M=\infty$ if no such an integer exists.
In particular, set $\pd_{\X}M=-1=\id_{\X}M$ if $M=0$.

\section{Main results}

\subsection{A relation between the derived and extension dimensions}

The following lemma is essentially contained in the proof of \cite[Theorem]{han2009derived}.

\begin{lemma}\label{lem-3.1}
For any bounded complex $X=(X_n,f_n)_{n\in\mathbb{Z}}$ over $\mathcal{A}$, we have
$$X\in \langle\oplus_{n\in \mathbb{Z}} Y_{n}[n] \rangle _{1}\diamond \langle\oplus_{n\in \mathbb{Z}}Z_{n}[n] \rangle _{1}$$
in $D^{b}(\mathcal{A})$, where $Y_{n}=\Ker f_n$ and $Z_{n}=\Im f_n$ for any $n\in\mathbb{Z}$,
and both $\oplus_{n\in \mathbb{Z}} Y_{n}[n]$ and $\oplus_{n\in \mathbb{Z}} Z_{n}[n]$ have only finitely many nonzero summands.
\end{lemma}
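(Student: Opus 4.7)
The plan is to realize $X$ as a term-wise extension of two complexes carrying zero differentials. I would first define a subcomplex $A^{\bullet}\subseteq X$ whose $n$-th component is $Y_n=\Ker f_n$ and whose differentials are all zero; the inclusions $Y_n\hookrightarrow X_n$ assemble into a genuine chain map precisely because $f_n$ restricted to $Y_n$ vanishes. Let $B^{\bullet}:=X/A^{\bullet}$ be the quotient complex, yielding a short exact sequence of complexes
$$0\longrightarrow A^{\bullet}\longrightarrow X\longrightarrow B^{\bullet}\longrightarrow 0.$$

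Next I would verify the crucial point that the induced differential on $B^{\bullet}$ is identically zero. By the first isomorphism theorem, the $n$-th component of $B^{\bullet}$ is $X_n/Y_n\cong \Im f_n=Z_n$, which gives the correct components. The induced differential sends $[x]\in X_n/Y_n$ to $[f_n(x)]$ in the adjacent quotient, but $f\circ f=0$ forces $f_n(x)$ to lie in the next $\Ker f=Y$, so $[f_n(x)]=0$. Since $X$ is bounded, only finitely many $Y_n$ and $Z_n$ are nonzero, and consequently $A^{\bullet}$ and $B^{\bullet}$ are manifestly isomorphic in $D^{b}(\mathcal{A})$ to the finite direct sums $\oplus_{n\in\mathbb{Z}}Y_n[n]$ and $\oplus_{n\in\mathbb{Z}}Z_n[n]$ respectively (in the shift convention of the paper).

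The displayed short exact sequence of complexes induces a distinguished triangle in $D^{b}(\mathcal{A})$, and by the very definitions of $*$ and $\diamond$ this places $X$ in $\langle\oplus_{n\in\mathbb{Z}}Y_n[n]\rangle_1 * \langle\oplus_{n\in\mathbb{Z}}Z_n[n]\rangle_1$, hence in $\langle\oplus_{n\in\mathbb{Z}}Y_n[n]\rangle_1\diamond\langle\oplus_{n\in\mathbb{Z}}Z_n[n]\rangle_1$, which is the desired conclusion.

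The only truly substantive step is the verification that the induced differential on $B^{\bullet}$ vanishes, and this follows instantly from the complex relation $f\circ f=0$. Everything else is formal bookkeeping, with boundedness of $X$ ensuring the direct sums in question have only finitely many nonzero summands.
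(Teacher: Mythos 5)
Your proof is correct and is essentially the argument the paper refers to (it cites the proof of Han's theorem in \cite{han2009derived}, which uses exactly this decomposition): the kernel subcomplex with zero differentials, the quotient complex whose components are the images and whose induced differential vanishes because $f\circ f=0$, and the resulting short exact sequence of complexes yielding the required triangle. Nothing is missing; the boundedness remark and the identification of a zero-differential complex with a finite direct sum of shifted stalk complexes are handled correctly.
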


We also need the following lemma.

\begin{lemma}\label{lem-3.2}
\begin{itemize}
\item[]
\item[$(1)$] For an object $M\in\mathcal{A}$, if $M\in [T]_{n+1}$ for some $T\in \mathcal{A}$, then
$M\in \langle T \rangle _{n+1}$ in $D^{b}(\mathcal{A})$ with $M$ and $T$ stalk complexes in degree zero.
\item[$(2)$] $\dim_{D^b(\mathcal{A})}\mathcal{A}\leqslant\min\{\derdim \mathcal{A}, \extdim \mathcal{A}\}$.
\end{itemize}
\end{lemma}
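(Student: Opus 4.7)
My plan is to prove (1) by induction on $n$, after which (2) will follow directly from (1) together with the definitions of the three dimensions involved.

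For (1), I would handle the base case $n=0$ first: here $M\in[T]_{1}=\add T$ means $M$ is a direct summand of a finite direct sum of copies of $T$, which is exactly the condition for $M\in\langle T\rangle_{1}$ in $D^{b}(\mathcal{A})$ when $M$ and $T$ are viewed as stalk complexes concentrated in degree zero. For the inductive step I would unfold $M\in[T]_{n+1}=[T]_{1}\bullet[T]_{n}$ to realize $M$ as a direct summand of some $A$ fitting in a short exact sequence $0\to U_{1}\to A\to U_{2}\to 0$ in $\mathcal{A}$ with $U_{1}\in[T]_{1}$ and $U_{2}\in[T]_{n}$. The inductive hypothesis gives $U_{2}\in\langle T\rangle_{n}$, and $U_{1}\in\langle T\rangle_{1}$ by the base case. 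Reading the short exact sequence as a distinguished triangle $U_{1}\to A\to U_{2}\to U_{1}[1]$ in $D^{b}(\mathcal{A})$ places $A\in\langle T\rangle_{1}*\langle T\rangle_{n}\subseteq\langle T\rangle_{1}\diamond\langle T\rangle_{n}$. Invoking associativity of $\diamond$ (recorded in the excerpt via the octahedral axiom), the recursion $\langle T\rangle_{n+1}=\langle T\rangle_{n}\diamond\langle T\rangle_{1}$ can be rewritten in the equivalent form $\langle T\rangle_{1}\diamond\langle T\rangle_{n}$, so $A\in\langle T\rangle_{n+1}$. Since $\langle T\rangle_{n+1}$ is a $\langle-\rangle_{1}$-closure it is closed under direct summands, whence $M\in\langle T\rangle_{n+1}$.

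For (2), both bounds come essentially for free once (1) is in place. If $\derdim\mathcal{A}=n<\infty$, choose $T\in D^{b}(\mathcal{A})$ with $D^{b}(\mathcal{A})=\langle T\rangle_{n+1}$; then $\mathcal{A}\subseteq D^{b}(\mathcal{A})=\langle T\rangle_{n+1}$ trivially. If $\extdim\mathcal{A}=n<\infty$, choose $T\in\mathcal{A}$ with $\mathcal{A}=[T]_{n+1}$; then part (1) applied object-wise yields $\mathcal{A}\subseteq\langle T\rangle_{n+1}$ in $D^{b}(\mathcal{A})$. Either choice witnesses $\dim_{D^{b}(\mathcal{A})}\mathcal{A}\leqslant n$, and the cases where one of the two dimensions is infinite are vacuous.

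The only subtlety I anticipate is the order-of-factors mismatch in the inductive step: the triangle naturally presents $A$ as an element of $\langle T\rangle_{1}\diamond\langle T\rangle_{n}$, whereas the paper's recursive definition of $\langle T\rangle_{n+1}$ uses $\langle T\rangle_{n}\diamond\langle T\rangle_{1}$. This is resolved in one line by associativity, so it is a bookkeeping point rather than a genuine obstacle; nothing deeper than the passage from short exact sequences to distinguished triangles, plus closure of $\langle T\rangle_{n+1}$ under direct summands, is required.
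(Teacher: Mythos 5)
Your proposal is correct and follows essentially the same route as the paper: unfolding the $\bullet$-filtration of $[T]_{n+1}$ into short exact sequences, converting these to distinguished triangles in $D^{b}(\mathcal{A})$, and assembling with Lemma \ref{lem-2.4} (the paper does the unrolling explicitly in one pass, you phrase it as an induction, which is the same computation). Part (2) is handled identically in both.
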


\begin{proof}
(1) Let $M\in [T]_{n+1}$. Then we have the following exact sequence
$$0\longrightarrow  Y_{i}\longrightarrow Z_{i-1}\oplus Z_{i-1}'\longrightarrow Z_{i}\longrightarrow 0$$
in $\mathcal{A}$ with $Z_{0}=M$, $Y_{i}\in [T]_{1} $ and $Z_{i}\in [T]_{n+1-i}$ for any $1\leqslant i \leqslant n$.
It induces a triangle
$$Y_{i}\longrightarrow Z_{i-1}\oplus Z_{i-1}'\longrightarrow Z_{i}\longrightarrow Y_{i}[1]$$
in $D^{b}(\mathcal{A})$ for any $1\leqslant i \leqslant n$.
Thus $\langle Z_{i-1} \rangle _{1}\subseteq  \langle Y_{i} \rangle _{1}\diamond \langle Z_{i} \rangle _{1}$
for any $1 \leqslant i \leqslant n$, and therefore
\begin{align*}
M\in\langle Z_{0} \rangle _{1}\subseteq& \langle Y_{1} \rangle _{1}\diamond\langle Y_{2} \rangle _{1}\diamond
\cdots \diamond\langle Y_{n} \rangle _{1}\diamond\langle Z_{n} \rangle _{1}\\
\subseteq& \underbrace{\langle T \rangle _{1}\diamond\langle T \rangle _{1}\diamond
\cdots \diamond\langle T \rangle _{1}\diamond\langle T \rangle _{1}}_{n}\\
\subseteq&\langle T \rangle _{n+1}.  \;\;\;\;\;\;\;\;\;\;\;\;\;\;\;\;(\text{by Lemma }\ref{lem-2.4})
\end{align*}

(2) By (1) and Definition \ref{def-2.3}.
\end{proof}

We establish a relation between the derived and extension dimensions of $\mathcal{A}$.

\begin{theorem}\label{thm-3.3}
We have
$$\dim_{D^b(\mathcal{A})}\mathcal{A}\leqslant\derdim \mathcal{A}\leqslant2\dim_{D^b(\mathcal{A})}\mathcal{A}+1\leqslant 2\extdim \mathcal{A}+1.$$
\end{theorem}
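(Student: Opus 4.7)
The statement is a three-part inequality, and my plan is to handle each of the three inequalities in turn, spending essentially all the effort on the middle one.

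The leftmost inequality $\dim_{D^b(\mathcal{A})}\mathcal{A}\leqslant\derdim\mathcal{A}$ is immediate from Definition \ref{def-2.3}: if $D^{b}(\mathcal{A})=\langle T\rangle_{n+1}$ for some $T$, then in particular the full subcategory $\mathcal{A}$ (viewed via stalk complexes in degree zero) is contained in $\langle T\rangle_{n+1}$, forcing $\dim_{D^b(\mathcal{A})}\mathcal{A}\leqslant n$. The rightmost inequality $2\dim_{D^b(\mathcal{A})}\mathcal{A}+1\leqslant 2\extdim\mathcal{A}+1$ follows at once from Lemma \ref{lem-3.2}(2), which has already been proved.

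The heart of the proof is the middle inequality $\derdim\mathcal{A}\leqslant 2\dim_{D^b(\mathcal{A})}\mathcal{A}+1$. Set $n:=\dim_{D^b(\mathcal{A})}\mathcal{A}$; assume $n<\infty$ (otherwise there is nothing to prove), and pick $T\in D^{b}(\mathcal{A})$ with $\mathcal{A}\subseteq\langle T\rangle_{n+1}$. Given an arbitrary bounded complex $X=(X_m,f_m)_{m\in\mathbb{Z}}$ over $\mathcal{A}$, apply Lemma \ref{lem-3.1} to write
$$X\in\langle\oplus_{m\in\mathbb{Z}}Y_{m}[m]\rangle_{1}\diamond\langle\oplus_{m\in\mathbb{Z}}Z_{m}[m]\rangle_{1},$$
where each $Y_{m}=\Ker f_{m}$ and $Z_{m}=\Im f_{m}$ lies in $\mathcal{A}$ and each direct sum has only finitely many nonzero terms. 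Since $\langle T\rangle_{n+1}$ is closed under shifts (by definition) and under finite direct sums (inductively, since $\langle T\rangle_{1}$ is), the assumption $\mathcal{A}\subseteq\langle T\rangle_{n+1}$ yields $\oplus_{m}Y_{m}[m]\in\langle T\rangle_{n+1}$ and $\oplus_{m}Z_{m}[m]\in\langle T\rangle_{n+1}$. Hence
$$\langle\oplus_{m}Y_{m}[m]\rangle_{1}\subseteq\langle T\rangle_{n+1}\ \text{and}\ \langle\oplus_{m}Z_{m}[m]\rangle_{1}\subseteq\langle T\rangle_{n+1}.$$
Combining these containments with Lemma \ref{lem-2.4} gives
$$X\in\langle T\rangle_{n+1}\diamond\langle T\rangle_{n+1}\subseteq\langle T\oplus T\rangle_{2n+2}=\langle T\rangle_{2n+2},$$
where the last equality is because $T$ and $T\oplus T$ generate the same $\langle-\rangle_{k}$. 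As $X$ was arbitrary, $D^{b}(\mathcal{A})=\langle T\rangle_{2n+2}$, so $\derdim\mathcal{A}\leqslant 2n+1$, as required.

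The only real subtlety is checking that everything in sight is closed under the operations we use: finite direct sums and shifts for $\langle T\rangle_{n+1}$, and of course verifying that $\langle T\oplus T\rangle_{k}=\langle T\rangle_{k}$; all of this is entirely formal from the definitions, so I do not expect any genuine obstacle beyond bookkeeping.
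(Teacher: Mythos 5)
Your proposal is correct and follows essentially the same route as the paper: both reduce to the middle inequality, invoke Lemma \ref{lem-3.1} to split an arbitrary bounded complex into two stalk-complex pieces, place each piece in $\langle T\rangle_{n+1}$ using $\mathcal{A}\subseteq\langle T\rangle_{n+1}$ together with closure under shifts and finite direct sums, and conclude via Lemma \ref{lem-2.4}. The only cosmetic difference is that you verify the first inequality directly from Definition \ref{def-2.3} whereas the paper cites Lemma \ref{lem-3.2}(2) for it; your explicit remark that $\langle T\oplus T\rangle_{k}=\langle T\rangle_{k}$ just makes precise a step the paper leaves implicit.
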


\begin{proof}
By Lemma \ref{lem-3.2}(2), it suffices to prove $\derdim \mathcal{A}\leqslant2\dim_{D^b(\mathcal{A})}\mathcal{A}+1$.
Without loss of generality, suppose that $\dim_{D^b(\mathcal{A})}\mathcal{A}=m<\infty$ and
$\A\subseteq\langle T\rangle_{m+1}$ for some $T\in D^b(\mathcal{A})$.
For any $X\in D^{b}(\A)$, by Lemma \ref{lem-3.1} we have
$$X\in \langle\oplus_{n\in \mathbb{Z}} Y_{n}[n] \rangle _{1}\diamond \langle\oplus_{n\in \mathbb{Z}}Z_{n}[n] \rangle _{1}$$
in $D^{b}(\mathcal{A})$, where $Y_{n},Z_{n}\in\mathcal{A}$ for any $n\in\mathbb{Z}$,
and $\oplus_{n\in \mathbb{Z}} Y_{n}[n]$ and $\oplus_{n\in \mathbb{Z}} Z_{n}[n]$ have only finitely many nonzero direct summands.

By Lemma \ref{lem-3.2}, we have $Z_{n}[n]\in \langle T \rangle_{m+1}$
and $\oplus_{n\in \mathbb{Z}} Z_{n}[n]\in\langle T \rangle_{m+1}$, and then
$\langle \oplus_{n\in \mathbb{Z}} Z_{n}[n]\rangle_{1} \subseteq\langle T \rangle_{m+1}$.
Similarly, we have $\langle \oplus_{n\in \mathbb{Z}} Y_{n}[n]\rangle_{1} \subseteq \langle T \rangle_{m+1}$.
It follows from Lemma \ref{lem-2.4} that
$$X\in \langle\oplus_{n\in \mathbb{Z}} Y_{n}[n] \rangle _{1}\diamond \langle\oplus_{n\in
\mathbb{Z}}Z_{n}[n] \rangle _{1}\subseteq  \langle T \rangle_{m+1}\diamond   \subseteq \langle T \rangle_{m+1}
\subseteq \langle T \rangle_{2m+2}$$
and $\derdim \mathcal{A} \leqslant 2m+1$.
\end{proof}

Let $\Lambda$ be an artin algebra. For simplicity, we write
$$\extdim\Lambda:=\extdim\mod\Lambda\ {\rm and}\ \derdim\Lambda:=\derdim\mod\Lambda.$$
Recall from \cite{A} that the {\bf representation dimension} $\repdim\Lambda$ of $\Lambda$ is defined as
\begin{equation*}
\repdim\Lambda:=
\begin{cases}
\inf\{\gldim\End_{\Lambda}(M)\mid M\ \text{is a generator-cogenerator for}\ \mod\Lambda\},\ \text{if}\ \Lambda\ \text{is non-semisimple;}\\
1,\;\text{if}\ \Lambda\ \text{is semisimple.}
\end{cases}
\end{equation*}
In \cite[p.70]{oppermann2007lower1}, Oppermann posed an open question:
\newline\centerline{\it Are there non-semisimple artin algebras $\Lambda$, such that the equality holds in the inequality}
$$\repdim \Lambda \geqslant\derdim\Lambda?$$
Let $\Lambda$ be a non-semisimple artin algebra of finite representation type. It is well known that $\repdim \Lambda=2$.
By \cite[Theorem]{han2009derived}, we have $\derdim\Lambda\leqslant 1$. Thus, in this case, $\repdim \Lambda>\derdim\Lambda$.
Here, we give the following example in which $\Lambda$ is of infinite representation type such that $\repdim \Lambda>\derdim\Lambda$.

\begin{example}\label{exa-3.4}
{\rm Let $\Lambda$ be the Beilinson algebra $kQ/I$ with $Q$ the quiver
$$\xymatrix{
&0 \ar@/_1pc/[r]_{x_{n}}\ar@/^1pc/[r]^{x_{0}}_{\vdots}
&1\ar@/_1pc/[r]_{x_{n}}\ar@/^1pc/[r]^{x_{0}}_{\vdots}
&2\ar@/_1pc/[r]_{x_{n}}\ar@/^1pc/[r]^{x_{0}}_{\vdots}
&3&\cdots &n-1\ar@/_1pc/[r]_{x_{n}}\ar@/^1pc/[r]^{x_{0}}_{\vdots}&n
}$$
and $I=(x_{i}x_{j}-x_{j}x_{i})$ (where $0 \leqslant i, j \leqslant n$) (see \cite[Example 3.7]{oppermann2010representation}).
Then $\gldim \Lambda=n$.
From \cite[Theorem 4.15]{oppermann2010representation} and its proof, we know that
$\repdim \Lambda =n+2$ and $\dim_{D^b(\mod\Lambda)}\mod\Lambda\geqslant n$.
Then $\extdim\Lambda \geqslant n$ by Lemma \ref{lem-3.2}(2). On the other hand, we have
$\extdim\Lambda\leqslant\repdim \Lambda-2=(n+2)-2=n$ by \cite[Corollary 3.6]{zheng2019extension}.
Thus $\extdim \Lambda=n$. Note that $\derdim\Lambda\leqslant\gldim\Lambda=n$
(see \cite[Lemma 2.11]{oppermann2009lower} or \cite[Proposition 7.4]{rouquier2008dimensions}).
Now Lemma \ref{lem-3.2}(2) induces the following equality
$$\dim_{D^{b}(\mod \Lambda)}\mod\Lambda=\extdim\Lambda=\derdim\Lambda=\gldim \Lambda=\repdim \Lambda-2 =n.$$}
\end{example}

\subsection{Syzygies and cosyzygies}

Let $M\in\mathcal{A}$. If $\mathcal{A}$ has enough projective objects, then there exists an exact sequence
$$\cdots\to P_i\to\cdots\to P_1\to P_0\to M\to 0$$
in $\mathcal{A}$ with all $P_i$ projective. We write $\Omega^n(M):=\Im(P_n\to P_{n-1})$ for any $n\geqslant 1$.
Dually, if $\mathcal{A}$ has enough injective objects, then there exists an exact sequence
$$0\to M \to I^0\to I^1 \to\cdots\to I^i\to\cdots$$
in $\mathcal{A}$ with all $I^i$ injective. We write $\Omega^{-n}(M):=\Im(I^{n-1}\to I^n)$ for any $n\geqslant 1$.
In particular, we write $\Omega^{0}(M):=M$.

%
%

\begin{lemma}\label{lem-3.5}
\begin{itemize}
\item[]
\item[$(1)$] If $\mathcal{A}$ has enough projective objects and
$$0\longrightarrow M\longrightarrow X^{0} \longrightarrow  X^{1}\longrightarrow\cdots\longrightarrow
X^{n} \longrightarrow 0,$$
is an exact sequence in $\mathcal{A}$ with $n\geqslant 0$, then
$$M\in[\Omega^{n}(X^{n})]_{1}\bullet[\Omega^{n-1}(X^{n-1})]_{1}\bullet\cdots\bullet
[\Omega^{1}(X^{1})]_{1}\bullet[X^{0}]_{1}\subseteq[\oplus_{i=0}^{n}\Omega^{i}(X^{i})]_{n+1}.$$
\item[$(2)$] If $\mathcal{A}$ has enough injective objects and
$$0\longrightarrow X_{n}\longrightarrow
\cdots \longrightarrow X_{1} \longrightarrow  X_{0} \longrightarrow M \longrightarrow 0,$$
is an exact sequence in $\mathcal{A}$ with $n\geqslant 0$, then
$$M\in[X_{0}]_{1}\bullet[\Omega^{-1}(X_{1})]_{1}\bullet\cdots\bullet[\Omega^{-n}(X_{n})]_{1}
\subseteq[\oplus_{i=0}^{n}\Omega^{-i}(X_{i})]_{n+1}.$$
\end{itemize}
\end{lemma}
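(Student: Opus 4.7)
My plan is to prove part~(1) by induction on $n$, with part~(2) following from the dual argument (injective envelopes and pushouts in place of projective covers and pullbacks, cosyzygies in place of syzygies). The base case $n=0$ is immediate: $M\cong X^0\in[X^0]_1$. For the inductive step I split the given sequence at $X^0$ by setting $K:=\ker(X^1\to X^2)$, which produces
$$0\to M\to X^0\to K\to 0\qquad\text{and}\qquad 0\to K\to X^1\to\cdots\to X^n\to 0,$$
the second sequence having length $n-1$.

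Since $\mathcal{A}$ has enough projectives, I choose a projective epimorphism $\pi\colon P\twoheadrightarrow K$ and form the pullback $E=X^0\times_K P$. The projection $E\to P$ splits (as $P$ is projective), giving $E\cong M\oplus P$, while the other projection produces the short exact sequence $0\to\Omega(K)\to M\oplus P\to X^0\to 0$. Hence $M\in[\Omega(K)]_1\bullet[X^0]_1$, using that this class is closed under direct summands by construction. The inductive hypothesis applied to the length-$(n-1)$ sequence gives
$$K\in[\Omega^{n-1}(X^n)]_1\bullet\cdots\bullet[\Omega(X^2)]_1\bullet[X^1]_1,$$
and I then invoke the auxiliary claim that if $B\in[\mathcal{U}]_1\bullet[\mathcal{V}]_1$ then $\Omega(B)\in[\Omega(\mathcal{U})]_1\bullet[\Omega(\mathcal{V})]_1$. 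Iterating this on the inductive-hypothesis expression shifts every syzygy up by one, yielding
$$\Omega(K)\in[\Omega^n(X^n)]_1\bullet\cdots\bullet[\Omega^2(X^2)]_1\bullet[\Omega(X^1)]_1.$$
Combining with $M\in[\Omega(K)]_1\bullet[X^0]_1$ and using associativity of $\bullet$ gives the first claimed inclusion; the second then follows by $n$-fold application of Lemma~\ref{lem-2.2}.

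The auxiliary claim is the main technical ingredient. I plan to prove it via the Horseshoe lemma: any short exact sequence $0\to U\to B'\to V\to 0$ witnessing that $B$ is a summand of $B'$ lifts to $0\to\Omega(U)\to\Omega(B')\oplus Q\to\Omega(V)\to 0$ for some projective $Q$ (arising because the Horseshoe projective cover of $B'$ need not be minimal). The extra $Q$ is absorbed by the $\add$-closure built into the definition of the $\bullet$-operation. Thus the main obstacle is not the induction itself but rather the projective-summand bookkeeping in this Horseshoe step; once it is handled cleanly, the dual picture for part~(2) is obtained by reversing arrows throughout.
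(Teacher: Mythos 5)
Your argument is correct, but note that the paper does not actually prove this lemma: it simply cites \cite[Lemma 5.8]{dao2014radius} for part (1) and declares (2) dual. What you have written is, in effect, a self-contained reconstruction of the cited result, and it is the standard one: split the long exact sequence at $X^0$, pull back $X^0\twoheadrightarrow K$ along a projective epimorphism $P\twoheadrightarrow K$ to get the split extension $E\cong M\oplus P$ together with $0\to\Omega(K)\to M\oplus P\to X^0\to 0$ (whence $M\in[\Omega(K)]_1\bullet[X^0]_1$, since $\bullet$ is closed under summands by the built-in $\add$), then apply $\Omega$ to the inductive expression for $K$ and use associativity of $\bullet$; the final inclusion is $n$ applications of Lemma \ref{lem-2.2}. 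The only point requiring care is exactly the one you flag: your auxiliary claim that $\Omega$ sends $[\mathcal{U}]_1\bullet[\mathcal{V}]_1$ into $[\Omega(\mathcal{U})]_1\bullet[\Omega(\mathcal{V})]_1$ rests on the horseshoe lemma, whose middle syzygy agrees with the chosen one only up to projective direct summands. Over an artin algebra this is harmless (minimal resolutions make $\Omega(B)$ a genuine summand of the horseshoe kernel), but in a general abelian category with enough projectives, syzygies are only defined up to projective summands, so you should either fix the syzygies $\Omega^i(X^i)$ compatibly via the horseshoe resolutions or note that the statement is to be read up to this ambiguity; the paper's own proof of Lemma \ref{lem-3.6} performs the identical bookkeeping (the summands $P_i$ there) with the same implicit convention, so your treatment is consistent with the paper's standards.
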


\begin{proof}
The assertion (1) is \cite[Lemma 5.8]{dao2014radius}, and (2) is dual to (1).
\end{proof}

\begin{lemma}\label{lem-3.6}
Let $X, Y\in \mathcal{A}$ satisfy $[X]_{n_{1}}\subseteq [Y]_{n_{2}}$ with $n_{1}, n_{2}\geqslant 1$.
Then for any $m\geqslant 0$, we have
\begin{itemize}
\item[$(1)$] If $\mathcal{A}$ has enough projective objects, then $[\Omega^{m}(X)]_{n_{1}}\subseteq[\Omega^{m}(Y)]_{n_{1}n_{2}}$.
\item[$(2)$] If $\mathcal{A}$ has enough injective objects, then $[\Omega^{-m}(X)]_{n_{1}}\subseteq[\Omega^{-m}(Y)]_{n_{1}n_{2}}$.
\end{itemize}

\end{lemma}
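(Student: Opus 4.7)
The plan is to proceed by induction on $m$, reducing the lemma to a single-step assertion: \textbf{if $X\in[Y]_{\ell}$ in $\mathcal{A}$, then $\Omega(X)\in[\Omega(Y)]_{\ell}$ for every $\ell\geqslant 1$} (with appropriate compatible choices of projective resolutions, so that discrepancies live in projective summands). Granting this, one iterates to obtain $\Omega^{m}(X)\in[\Omega^{m}(Y)]_{n_{2}}$ from the chain $X\in[X]_{n_{1}}\subseteq[Y]_{n_{2}}$. Since $[\Omega^{m}(Y)]_{n_{2}}$ is closed under finite direct sums and direct summands, we get $[\Omega^{m}(X)]_{1}\subseteq[\Omega^{m}(Y)]_{n_{2}}$, whence
$$[\Omega^{m}(X)]_{n_{1}}=\underbrace{[\Omega^{m}(X)]_{1}\bullet\cdots\bullet[\Omega^{m}(X)]_{1}}_{n_{1}\text{ factors}}\subseteq\underbrace{[\Omega^{m}(Y)]_{n_{2}}\bullet\cdots\bullet[\Omega^{m}(Y)]_{n_{2}}}_{n_{1}\text{ factors}}=[\Omega^{m}(Y)]_{n_{1}n_{2}},$$
using associativity of $\bullet$ and the natural extension of Lemma \ref{lem-2.2}.

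The single-step claim is proved by induction on $\ell$. For $\ell=1$, if $X\in\add Y$ then $X\oplus X'\cong Y^{t}$ for some $X'$ and $t$; taking $\Omega$ compatibly yields $\Omega(X)\oplus\Omega(X')\cong\Omega(Y)^{t}$, so $\Omega(X)\in\add\Omega(Y)=[\Omega(Y)]_{1}$. For $\ell\geqslant 2$, the object $X\in[Y]_{\ell}=[Y]_{1}\bullet[Y]_{\ell-1}$ is a direct summand of some $W$ fitting in a short exact sequence $0\to V_{1}\to W\to V_{2}\to 0$ with $V_{1}\in[Y]_{1}$ and $V_{2}\in[Y]_{\ell-1}$. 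The Horseshoe lemma produces a short exact sequence
$$0\longrightarrow\Omega(V_{1})\longrightarrow\Omega(W)\longrightarrow\Omega(V_{2})\longrightarrow 0$$
(for a suitable projective resolution of $W$). By the base case, $\Omega(V_{1})\in[\Omega(Y)]_{1}$, and by the inductive hypothesis, $\Omega(V_{2})\in[\Omega(Y)]_{\ell-1}$, so that $\Omega(W)\in[\Omega(Y)]_{1}\bullet[\Omega(Y)]_{\ell-1}=[\Omega(Y)]_{\ell}$. Since $\Omega(X)$ is a direct summand of $\Omega(W)$ and $[\Omega(Y)]_{\ell}$ is closed under direct summands, $\Omega(X)\in[\Omega(Y)]_{\ell}$.

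The main technical obstacle is bookkeeping the projective summands in the Horseshoe step. If $P(V_{1})\to V_{1}$ and $P(V_{2})\to V_{2}$ are projective covers, then $P(V_{1})\oplus P(V_{2})\twoheadrightarrow W$ is surjective but typically not a projective cover, so its kernel agrees with $\Omega(W)$ only modulo a projective summand (Schanuel's lemma, cleanly in the Krull--Schmidt setting such as $\mod\Lambda$). The cleanest remedy is to \emph{define} $\Omega(W)$ as this kernel, thereby making the Horseshoe sequence exact on the nose; altering the projective resolution only alters $\Omega$ by a projective direct summand, and since the target classes $[\Omega(Y)]_{k}$ are closed under direct summands and sums with objects they already contain, the conclusion is insensitive to this ambiguity. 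Assertion (2) is formally dual: replace projective covers, the Horseshoe lemma, and syzygies by injective envelopes, the dual Horseshoe construction, and cosyzygies, and run the same two nested inductions.
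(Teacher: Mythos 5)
Your proposal is correct and follows essentially the same route as the paper's proof: unfold the defining short exact sequences of $[Y]_{n_2}$, apply the horseshoe lemma, use closure of $[\,\cdot\,]_k$ under direct summands to absorb the resulting projective correction terms, and finish with the multiplicativity $[\mathcal{U}]_{a}\bullet[\mathcal{U}]_{b}=[\mathcal{U}]_{a+b}$ together with monotonicity of $\bullet$ to pass from $n_1=1$ to general $n_1$. The only differences are organizational — you induct on $n_2$ (your $\ell$) and iterate $\Omega$ one step at a time over $m$, whereas the paper inducts on $n_1$ and applies $\Omega^m$ in one stroke — and your explicit discussion of the projective-summand ambiguity in $\Omega$ is, if anything, more careful than the paper's.
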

\begin{proof}
We only prove (1), and we get (2) dually.

We proceed by induction on $n_{1}$. Let $n_1=1$ and $W\in [\Omega^{m}(X)]_{1}$. Then
$$W\oplus W_{1}\cong (\Omega^{m}(X))^{(l)}(\cong \Omega^{m}(X^{(l)})$$
for some $l\geqslant 1$ and $W_{1}\in \A$.
Since $X^{(l)}\in [X]_{1} \subseteq [Y]_{n_{2}}$ by assumption,
we have the following exact sequences
$$0\longrightarrow Y^{'}_{1}\longrightarrow X^{(l)}\oplus Z_1 \longrightarrow Y_{1}\longrightarrow 0,$$
$$0\longrightarrow Y^{'}_{2}\longrightarrow  Y_{1}\oplus Z_2 \longrightarrow Y_{2}\longrightarrow 0,$$
$$0\longrightarrow Y^{'}_{3}\longrightarrow Y_{2}\oplus Z_3 \longrightarrow Y_{3}\longrightarrow 0,$$
$$\cdots\cdots\cdots$$
$$0\longrightarrow Y^{'}_{n_{2}-1}\longrightarrow Y_{n_{2}-2}\oplus Z_{n_{2}-1} \longrightarrow Y_{n_{2}-1}\longrightarrow 0,$$
where $Z_i\in\mathcal{A}$, $Y^{'}_i\in[Y]_{1}$ and $Y_i\in[Y]_{n_{2}-i}$ for any $1\leqslant i\leqslant n_2-1$. By the horseshoe lemma, we have
$$0\longrightarrow\Omega^{m}(Y^{'}_{1})\longrightarrow\Omega^{m}(X^{(l)})\oplus\Omega^{m}(Z_1)\oplus P_{1}\longrightarrow\Omega^{m}(Y_{1})\longrightarrow 0,$$
$$0\longrightarrow\Omega^{m}(Y^{'}_{2})\longrightarrow\Omega^{m}(Y_{1})\oplus\Omega^{m}(Z_2)\oplus P_{2}\longrightarrow\Omega^{m}(Y_{2})\longrightarrow 0,$$
$$0\longrightarrow\Omega^{m}(Y^{'}_{3})\longrightarrow\Omega^{m}(Y_{2})\oplus\Omega^{m}(Z_3)\oplus P_{3}\longrightarrow\Omega^{m}(Y_{3})\longrightarrow 0,$$
$$\cdots\cdots\cdots$$
$$0\longrightarrow \Omega^{m}(Y^{'}_{n_{2}-1})\longrightarrow \Omega^{m}(Y_{n_{2}-2})\oplus\Omega^{m}(Z_{n_{2}-1})\oplus P_{n_{2}-1}
\longrightarrow \Omega^{m}(Y_{n_{2}-1})\longrightarrow 0,$$
where all $P_{i}$ are projective. Then we have
\begin{align*}
\Omega^{m}(X^{(l)}) \in &[\Omega^{m}(Y^{'}_{1})]_{1}\bullet[\Omega^{m}(Y_{1})]_{1}\\
\subseteq &[\Omega^{m}(Y^{'}_{1})]_{1}\bullet[\Omega^{m}(Y^{'}_{2})]_{1}\bullet[\Omega^{m}(Y_{2})]_{1}\\
\subseteq &[\Omega^{m}(Y^{'}_{1})]_{1}\bullet[\Omega^{m}(Y^{'}_{2})]_{1}\bullet [\Omega^{m}(Y^{'}_{3})]_{1}\bullet[\Omega^{m}(Y_{3})]_{1}\\
&\cdots\cdots\cdots\\
\subseteq &[\Omega^{m}(Y^{'}_{1})]_{1}\bullet[\Omega^{m}(Y^{'}_{2})]_{1}\bullet\cdots\bullet
[\Omega^{m}(Y^{'}_{n_{2}-1})]_{1}\bullet[\Omega^{m} (Y_{n_{2}-1})]_{1}\\
\subseteq &[(\oplus_{i=1}^{n_{2}-1} \Omega^{m}(Y^{'}_{i}))\oplus \Omega^{m}(Y_{n_{2}-1})]_{n_{2}}\\
\subseteq &[\Omega^{m}(Y)]_{n_{2}},
\end{align*}
and hence $W\in [\Omega^{m}(Y)]_{n_{2}}$. The case for $n_1=1$ is proved.

Now suppose $n_1\geqslant 2$ and $W\in [\Omega^{m}(X)]_{n_1}$.
By \cite[Proposition 2.2(3)]{zheng2019extension} and assumption, we have
$$[X]_{1}\subseteq[X]_{n_1-1}\subseteq[X]_{n_1}\subseteq [Y]_{n_{2}}.$$
Then by the induction hypothesis, we have
$$[\Omega^{m}(X)]_{1}\subseteq[\Omega^{m}(Y)]_{n_{2}}\ \text{and}\ [\Omega^{m}(X)]_{n_1-1}\subseteq[\Omega^{m}(Y)]_{(n_1-1)n_{2}}.$$
Thus
\begin{align*}
W \in &[ \Omega^{m}(X)]_{n_1}\\
=&[ \Omega^{m}(X)]_{1} \bullet [ \Omega^{m}(X)]_{n_1-1}\\
\subseteq &[\Omega^{m}(Y)]_{n_{2}}\bullet [\Omega^{m}(Y)]_{(n_1-1)n_{2}} \ {\rm  (by \;\cite[Proposition\; 2.2(1)]{zheng2019extension})}\\
= & [\Omega^{m}(Y)]_{n_1n_{2}.}
\end{align*}
The proof is finished.
\end{proof}

\subsection{$t_{\V}$-radical layer length and extension dimension}

From now on, $\La$ is an artin algebra.
Then the category $\mod\Lambda$ of finitely generated right $\Lambda$-modules
is a length-category. We use $\rad \La$ to denote the Jacobson radical of $\La$.
For a module $M$ in $\mod\La$, we use $\top M$ to denote the top of $M$.

Let $\mathcal{S}$ be the set of all pairwise non-isomorphic simple modules in $\mod\Lambda$ and $\mathcal{V}$ a subset of $\mathcal{S}$.
%
We write $\mathfrak{F}\,(\mathcal{V}):=\{M\in\mod\Lambda\mid$ there exists a finite chain
$$0=M_0\subseteq M_1\subseteq \cdots\subseteq M_m=M$$ of submodules of $M$
such that each quotient $M_i / M_{i-1}$ is isomorphic to some module in $\mathcal{V}\}$.
By \cite[Lamma 5.7 and Proposition 5.9]{huard2013layer}, we have that
$(\T_{\mathcal{V}}, \mathfrak{F}(\mathcal{V}))$ is a torsion pair, where
$$\T_{\mathcal{V}}=\{M \in \mod \Lambda\mid\top M\in \add \mathcal{V}'\ {\rm with}\ \mathcal{V}'=\mathcal{S}\backslash\mathcal{V}\}.$$
We use $t_\mathcal{V}$ to denote the torsion radical of the torsion pair $(\mathcal{T}_\mathcal{V}, \mathfrak{F}(\mathcal{V}))$.
Then $t_{\mathcal{V}}(M)\in \T_{\mathcal{V}}$ and $q_{_{t_{\mathcal{V}}}}(M)\in\mathfrak{F}(\mathcal{V})$ for any
$M\in \mod \Lambda$. By \cite[Proposition 5.3]{huard2013layer}, we have
$$\mathfrak{F}(\mathcal{V})=\{ M\in \mod \Lambda \mid t_{\mathcal{V}}(M)=0\},$$
$$\T_{\mathcal{V}}=\{ M\in \mod \La\mid t_{\mathcal{V}}(M)\cong M\}.$$





We have the following easy observation.

\begin{lemma}\label{lem-3.7}
Let $\mathcal{V}$ be a subset of $\mathcal{S}$. Then for any $M\in \mod \Lambda$ and $i\geqslant 0$,
we have the following exact sequences
$$0 \rightarrow t_{\mathcal{V}}F^{i}_{t_{\mathcal{V}}}(M)\rightarrow
F^{i}_{t_{\mathcal{V}}}(M)\rightarrow q_{t_{\mathcal{V}}}F^{i}_{t_{\mathcal{V}}}(M) \rightarrow 0,$$
$$0 \rightarrow F^{i+1}_{t_{\mathcal{V}}}(M)\rightarrow
t_{\mathcal{V}}F^{i}_{t_{\mathcal{V}}}(M)\rightarrow \top t_{\mathcal{V}}F^{i}_{t_{\mathcal{V}}}(M) \rightarrow 0,$$
where $F_{t_{\mathcal{V}}}=\rad\circ t_{\mathcal{V}}$.
\end{lemma}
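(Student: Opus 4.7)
The plan is to recognize both sequences as instances of standard short exact sequences associated to a module, applied to suitable objects, with the key being to unwind the definitions of $F_{t_{\V}}$, $t_{\V}$ and $q_{t_{\V}}$.

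First, for the first sequence, I would invoke the definition of the torsion radical. By construction of the torsion pair $(\T_{\V},\mathfrak{F}(\V))$, for every $N\in\mod\Lambda$ one has the canonical short exact sequence
\[
0\longrightarrow t_{\V}(N)\longrightarrow N\longrightarrow q_{t_{\V}}(N)\longrightarrow 0,
\]
where by definition $q_{t_{\V}}=1_{\mod\Lambda}/t_{\V}$. Specializing $N$ to $F^{i}_{t_{\V}}(M)$ yields the first exact sequence verbatim; no further argument is needed.

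For the second sequence, I would first unpack $F_{t_{\V}}=\rad\circ t_{\V}$, which gives
\[
F^{i+1}_{t_{\V}}(M)=F_{t_{\V}}\bigl(F^{i}_{t_{\V}}(M)\bigr)=\rad\bigl(t_{\V}F^{i}_{t_{\V}}(M)\bigr).
\]
Now for any module $L\in\mod\Lambda$ one has the canonical short exact sequence $0\to\rad L\to L\to \top L\to 0$. Applying this to $L:=t_{\V}F^{i}_{t_{\V}}(M)$ and using the identification above recovers the second sequence.

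Since both sequences reduce to textbook facts (the torsion-torsionfree decomposition and the radical-top sequence) once the notation is unwound, there is no real obstacle here; the only thing to watch out for is notational bookkeeping with the iterated functor $F^{i}_{t_{\V}}$ and the distinction between $\rad\circ t_{\V}$ applied to $F^{i}_{t_{\V}}(M)$ and $\rad$ applied directly to $t_{\V}F^{i}_{t_{\V}}(M)$, but these coincide by the very definition of $F_{t_{\V}}$.
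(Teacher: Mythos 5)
Your proof is correct and is exactly the intended justification: the paper states this lemma as an ``easy observation'' with no written proof, and the two sequences are indeed just the canonical torsion sequence $0\to t_{\V}(N)\to N\to q_{t_{\V}}(N)\to 0$ applied to $N=F^{i}_{t_{\V}}(M)$ and the radical--top sequence applied to $L=t_{\V}F^{i}_{t_{\V}}(M)$, using $F^{i+1}_{t_{\V}}(M)=\rad\bigl(t_{\V}F^{i}_{t_{\V}}(M)\bigr)$.
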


\begin{lemma}\label{lem-3.8}
Let $\mathcal{V}$ be a subset of $\mathcal{S}$ and $M\in \mod \Lambda$.
\begin{itemize}
\item[$(1)$] If $\ell\ell^{t_{\mathcal{V}}}(M)=0$, then $M\in\mathfrak{F}(\mathcal{V})$
and $M\cong q_{t_\mathcal{V}}(M)$.
\item[$(2)$] If $\ell\ell^{t_{\mathcal{V}}}(\Lambda)=n$, then $\ell\ell^{t_{\mathcal{V}}}(F^{n}_{t_{\mathcal{V}}}(M))=0$;
in particular, $F^{n}_{t_{\mathcal{V}}}(M)\in\mathfrak{F}(\mathcal{V})$.
\item[$(3)$] If $M=\oplus^{m}_{i=1}M_{i}$, then
$\ell\ell^{t_{\mathcal{V}}}(M)=\max\{\ell\ell^{t_{\mathcal{V}}}(M_{i})\;|\;1\leqslant i \leqslant n\}$.
\end{itemize}
\end{lemma}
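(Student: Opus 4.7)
For part (1), $\ell\ell^{t_\mathcal{V}}(M)=0$ unwinds by Definition \ref{def-2.5} to $t_\mathcal{V}(M)=0$, so by the characterization $\mathfrak{F}(\mathcal{V})=\{X\in\mod\Lambda\mid t_\mathcal{V}(X)=0\}$ recalled just above the statement, $M\in\mathfrak{F}(\mathcal{V})$; the torsion-pair short exact sequence $0\to t_\mathcal{V}(M)\to M\to q_{t_\mathcal{V}}(M)\to 0$ then collapses to $M\cong q_{t_\mathcal{V}}(M)$. For part (3), both $t_\mathcal{V}$ (because $\mathcal{T}_\mathcal{V}$ is closed under direct sums) and $\rad$ commute with finite direct sums, hence so do $F_{t_\mathcal{V}}^{i}$ and $t_\mathcal{V}\circ F_{t_\mathcal{V}}^{i}$. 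Thus $t_\mathcal{V}\circ F_{t_\mathcal{V}}^{i}(\bigoplus_{j=1}^{m} M_j)=\bigoplus_{j=1}^{m} t_\mathcal{V}\circ F_{t_\mathcal{V}}^{i}(M_j)$ vanishes iff every summand does, and taking the infimum over $i$ yields the claimed max formula.

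The key input for part (2) is the intermediate claim that any surjection $p\colon N\twoheadrightarrow M$ in $\mod\Lambda$ induces a surjection $F_{t_\mathcal{V}}^{i}(N)\twoheadrightarrow F_{t_\mathcal{V}}^{i}(M)$ for every $i\geqslant 0$. The crucial step is that $t_\mathcal{V}$ preserves surjections, which I would verify as follows: the image $I$ of the composite $t_\mathcal{V}(N)\hookrightarrow N\twoheadrightarrow M$ is a quotient of the torsion object $t_\mathcal{V}(N)$, so lies in $\mathcal{T}_\mathcal{V}$, and hence $I\subseteq t_\mathcal{V}(M)$; meanwhile the cokernel $t_\mathcal{V}(M)/I$ is simultaneously a quotient of $t_\mathcal{V}(M)\in\mathcal{T}_\mathcal{V}$ and a quotient of $N/t_\mathcal{V}(N)\cong q_{t_\mathcal{V}}(N)\in\mathfrak{F}(\mathcal{V})$, so the Hom-orthogonality $\Hom_\Lambda(\mathcal{T}_\mathcal{V},\mathfrak{F}(\mathcal{V}))=0$ forces $t_\mathcal{V}(M)/I=0$. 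Since $\rad(-)=(-)\cdot\rad\Lambda$ clearly preserves surjections, so does $F_{t_\mathcal{V}}=\rad\circ t_\mathcal{V}$, and the intermediate claim follows by iteration.

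To finish (2), choose a surjection $\Lambda^{k}\twoheadrightarrow M$. By part (3) and the hypothesis $\ell\ell^{t_\mathcal{V}}(\Lambda^{k})=\ell\ell^{t_\mathcal{V}}(\Lambda)=n$, so Lemma \ref{lem-2.6} with $j=n$ gives $t_\mathcal{V}\circ F_{t_\mathcal{V}}^{n}(\Lambda^{k})=0$. The intermediate claim produces a surjection $F_{t_\mathcal{V}}^{n}(\Lambda^{k})\twoheadrightarrow F_{t_\mathcal{V}}^{n}(M)$, and a further application of $t_\mathcal{V}$ (which again preserves surjections) forces $t_\mathcal{V}\circ F_{t_\mathcal{V}}^{n}(M)=0$, i.e., $\ell\ell^{t_\mathcal{V}}(F_{t_\mathcal{V}}^{n}(M))=0$; the membership $F_{t_\mathcal{V}}^{n}(M)\in\mathfrak{F}(\mathcal{V})$ is then immediate from part (1). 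The main obstacle, as I see it, is the surjection-preservation property of $t_\mathcal{V}$, since this torsion radical is a priori only left exact; all other ingredients are formal manipulations with the definitions and additivity.
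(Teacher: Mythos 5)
Parts (1) and (3) are fine and agree with the paper (which simply quotes \cite[Lemma 3.4(a)]{huard2013layer} for (3)). For part (2) the paper just cites \cite[Lemma 3.4(b)]{huard2013layer} to get $\ell\ell^{t_{\mathcal{V}}}(M)\leqslant\ell\ell^{t_{\mathcal{V}}}(\Lambda)$ and then applies Lemma \ref{lem-2.6}, whereas you try to reprove that inequality from scratch via a surjection $\Lambda^{k}\twoheadrightarrow M$. Your high-level strategy is the right one, but your verification of the step you yourself single out as crucial --- that $t_{\mathcal{V}}$ preserves surjections --- contains a genuine error.

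Concretely: you argue that $t_{\mathcal{V}}(M)/I$ is a quotient of $t_{\mathcal{V}}(M)\in\mathcal{T}_{\mathcal{V}}$ and (a subquotient of, not literally a quotient of) $N/t_{\mathcal{V}}(N)\in\mathfrak{F}(\mathcal{V})$, and then conclude it vanishes ``by Hom-orthogonality.'' But $\Hom_{\Lambda}(\mathcal{T}_{\mathcal{V}},\mathfrak{F}(\mathcal{V}))=0$ forbids nonzero maps \emph{from} torsion \emph{to} torsion-free; what you would need here is the vanishing of maps from a torsion-free object \emph{onto} a torsion object, which fails for general torsion pairs (for the classical torsion pair on abelian groups, $\mathbb{Z}\twoheadrightarrow\mathbb{Z}/2$ shows the torsion radical does \emph{not} preserve epimorphisms, and $\mathbb{Z}/2$ is a quotient of both a torsion and a torsion-free object without being zero). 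So the argument as written proves nothing. The step is nevertheless true for this particular torsion pair, and the repair uses a special feature of $\mathfrak{F}(\mathcal{V})$: since $M\in\mathfrak{F}(\mathcal{V})$ exactly when all composition factors of $M$ lie in $\mathcal{V}$, the class $\mathfrak{F}(\mathcal{V})$ is closed under quotients. Hence $M/I\cong N/(t_{\mathcal{V}}(N)+\Ker p)$, being a quotient of $N/t_{\mathcal{V}}(N)\in\mathfrak{F}(\mathcal{V})$, lies in $\mathfrak{F}(\mathcal{V})$, and now the inclusion $t_{\mathcal{V}}(M)/I\hookrightarrow M/I$ is a map from an object of $\mathcal{T}_{\mathcal{V}}$ to an object of $\mathfrak{F}(\mathcal{V})$, so it is zero and $t_{\mathcal{V}}(M)=I=p(t_{\mathcal{V}}(N))$. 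With that correction the rest of your argument for (2) goes through.
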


\begin{proof}
(1) If $\ell\ell^{t_{\mathcal{V}}}(M)=0$, then $t_{\V}(M)=0$ and $M\in\mathfrak{F}(\mathcal{V})$.
Putting $i=0$ in the first exact sequence in Lemma \ref{lem-3.7}, we have $M\in\mathfrak{F}(\mathcal{V})$.

(2) By \cite[Lemma 3.4(b)]{huard2013layer}, we have $\ell\ell^{t_{\mathcal{V}}}(M)\leqslant\ell\ell^{t_{\mathcal{V}}}(\Lambda)=n$.
Thus $\ell\ell^{t_{\mathcal{V}}}(F^{n}_{t_{\mathcal{V}}}(M))=0$ by Lemma \ref{lem-2.6}.

(3) It follows from \cite[Lemma 3.4(a)]{huard2013layer}.
\end{proof}

\begin{lemma}\label{lem-3.9}
Let $\mathcal{V}$ be a subset of $\mathcal{S}$. Then the following statements are equivalent.
\begin{itemize}
\item[$(1)$] $\mathcal{V}=\mathcal{S}$.
\item[$(2)$] $\ell\ell^{t_{\mathcal{V}}}(\Lambda)=0$.
\item[$(3)$] $\ell\ell^{t_{\mathcal{V}}}(M)=0$ for any $M\in\mod\Lambda$.
\item[$(4)$] $\mathfrak{F}(\mathcal{V})=\mod\Lambda$.
\end{itemize}
\end{lemma}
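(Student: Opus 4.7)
The plan is to prove the chain of implications $(1)\Rightarrow(2)\Rightarrow(3)\Rightarrow(4)\Rightarrow(1)$, leaning on the preparatory lemmas and on the definition of the torsion pair $(\T_{\V}, \mathfrak{F}(\V))$ that precedes the statement.

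First I would handle $(1)\Rightarrow(2)$. If $\V=\mathcal{S}$ then $\V'=\mathcal{S}\setminus\V=\emptyset$, so by the description $\T_{\V}=\{M\mid\top M\in\add\V'\}$ we get $\T_{\V}=\{0\}$. Hence the torsion radical $t_{\V}$ is identically zero; in particular $t_{\V}(\Lambda)=0$, which by the definition of $\ell\ell^{t_{\V}}$ (as the smallest $i$ with $t_{\V}\circ F_{t_{\V}}^{i}(\Lambda)=0$) forces $\ell\ell^{t_{\V}}(\Lambda)=0$.

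Next, $(2)\Rightarrow(3)$ is immediate from Lemma~\ref{lem-3.8}(2) applied with $n=0$: for arbitrary $M\in\mod\Lambda$ it gives $\ell\ell^{t_{\V}}(F^{0}_{t_{\V}}(M))=\ell\ell^{t_{\V}}(M)=0$. Then $(3)\Rightarrow(4)$ is just Lemma~\ref{lem-3.8}(1), which asserts that $\ell\ell^{t_{\V}}(M)=0$ implies $M\in\mathfrak{F}(\V)$; so $(3)$ yields $\mod\Lambda\subseteq\mathfrak{F}(\V)$, hence equality.

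Finally, for $(4)\Rightarrow(1)$, I would evaluate $\mathfrak{F}(\V)=\mod\Lambda$ on an arbitrary simple module $S\in\mathcal{S}$. By the definition of $\mathfrak{F}(\V)$, $S$ admits a finite chain of submodules whose successive quotients lie in $\V$; since $S$ is simple the only such chain is $0\subsetneq S$, forcing $S\in\V$. Thus $\mathcal{S}\subseteq\V$, and equality follows. No step is really an obstacle here; the only thing to watch is the bookkeeping convention $F^{0}_{t_{\V}}=\Id$ used in the step $(2)\Rightarrow(3)$, and the purely formal observation in $(4)\Rightarrow(1)$ that a composition series of a simple module has length one.
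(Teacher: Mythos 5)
Your proof is correct. The route differs from the paper's only in organization and in which auxiliary facts are invoked: the paper proves $(1)\Leftrightarrow(4)$ and $(3)\Rightarrow(2)$ as trivial, gets $(2)\Rightarrow(3)$ by citing an external result from an earlier paper, and closes the loop via the characterization $\mathfrak{F}(\mathcal{V})=\{M\mid t_{\mathcal{V}}(M)=0\}$; you instead run the cycle $(1)\Rightarrow(2)\Rightarrow(3)\Rightarrow(4)\Rightarrow(1)$ and, for the one genuinely nontrivial step $(2)\Rightarrow(3)$, use the internal Lemma~\ref{lem-3.8}(2) with $n=0$ (which rests on the same underlying fact, namely $\ell\ell^{t_{\mathcal{V}}}(M)\leqslant\ell\ell^{t_{\mathcal{V}}}(\Lambda)$). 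Your version has the mild advantage of being self-contained within the paper's own lemmas and of spelling out the ``trivial'' directions: the observation that $\mathcal{V}=\mathcal{S}$ forces $\mathcal{T}_{\mathcal{V}}=\{0\}$ (so $t_{\mathcal{V}}\equiv 0$), and the observation that a simple module in $\mathfrak{F}(\mathcal{V})$ must itself lie in $\mathcal{V}$ because any filtration with simple nonzero subquotients of a simple module has length one. Both arguments are sound; nothing is missing.
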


\begin{proof}
The implications $(1)\Leftrightarrow (4)$ and $(3)\Rightarrow (2)$ are trivial.
By \cite[Proposition 3.4]{zheng2017upper}, we have $(2)\Rightarrow (3)$.

Since $\mathfrak{F}(\mathcal{V})=\mod\Lambda$ if and only if $t_\mathcal{V}(M)=0$ for any $M\in\mod\Lambda$, we have
$(3)\Leftrightarrow (4)$.
\end{proof}

For a subcategory $\mathcal{X}$ of $\mod\Lambda$,
we write $$\X^{\perp}:=\{ Z\in \mod \Lambda\mid\Ext^{i}_{\Lambda}(X, Z)=0
\text{\;for any }X\in \X \;\text{and}\;i\geqslant 1\}.$$

\begin{lemma}\label{lem-3.10}
Let $\X$ be a contravariantly finite and resolving subcategory of $\mod\Lambda$ and
$$0 \longrightarrow C_{1} \longrightarrow C_{2}\longrightarrow C_{3}\longrightarrow 0$$
an exact sequence in $\mod \Lambda$. Then
there exists the following commutative diagram with exact columns and rows
\[\xymatrix{
&0 \ar[d]&0\ar[d]&&0\ar[d]&0\ar[d]&0\ar[d]&\\
\cdots \ar[r]& X^n_1\ar[r]\ar[d]& X^{n-1}_1\ar[r]\ar[d]&\cdots\ar[r]
& X^1_1\ar[r]\ar[d]& X^0_1\ar[r]\ar[d]&C_{1}\ar[r]\ar[d]&0\\
\cdots \ar[r]& X^n_2\ar[r]\ar[d]& X^{n-1}_2\ar[r]\ar[d]&\cdots\ar[r]
& X^1_2\ar[r]\ar[d]& X^0_2\ar[r]\ar[d]& C_{2}\ar[r]\ar[d]&0\\
\cdots \ar[r]& X^n_3\ar[r]\ar[d]& X^{n-1}_3\ar[r]\ar[d]& \cdots\ar[r]
& X^1_3\ar[r]\ar[d]& X^0_3\ar[r]\ar[d]& C_{3}\ar[r]\ar[d]&0\\
&0&0&&0&0&0&\\}\]
satisfying the following conditions.
\begin{itemize}
\item[$(1)$] The top and bottom rows are minimal $\mathcal{X}$-resolutions of $C_1$ and $C_3$ respectively,
and the middle row is an $\mathcal{X}$-resolution of $C_2$.
\item[$(2)$] For any $i\geqslant 1$, set $Y^i:=\Ker(X^{i-1}_2\to X^{i-2}_2)$ (note: $X^{-1}_2=C_2$). Then
$Y^i=\Omega^{i}_{\X}(C_2)\oplus X^i$ for some $X^i\in\mathcal{X}$, and all
$\Omega^{i}_{\X}(C_1)$, $\Omega^{i}_{\X}(C_3)$ and $Y^i$ are in $\mathcal{X}^{\bot}$. Moreover, for any $i\geqslant 1$,
we have the following exact sequence
$$0\to \Omega^{i}_{\X}(C_1) \to Y^i(=\Omega^{i}_{\X}(C_2)\oplus X^i)\to \Omega^{i}_{\X}(C_3)\to 0.\eqno{(3{\text -}i)}$$
In particular, if $\Omega^{n}_{\X}(C_3)\in\mathcal{X}$ for some $n\geqslant 1$, then the sequence $(3{\text -}n)$ splits.
\end{itemize}
\end{lemma}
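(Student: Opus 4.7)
The plan is to prove the lemma by induction on $i$, constructing the commutative diagram one column at a time via a horseshoe-style argument adapted to $\mathcal{X}$-resolutions. First I would fix minimal $\mathcal{X}$-resolutions of $C_1$ and $C_3$, which exist by the contravariant finiteness of $\mathcal{X}$; these provide the top and bottom rows. The induction hypothesis at step $i$ is that the first $i$ columns of the diagram have been constructed together with the short exact sequence $(3\text{-}i)$ of $i$-th syzygies satisfying $Y^i = \Omega^i_\mathcal{X}(C_2) \oplus X^i$ for some $X^i \in \mathcal{X}$.

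For the inductive step, I would take minimal right $\mathcal{X}$-approximations $X^i_1 \to \Omega^i_\mathcal{X}(C_1)$ and $X^i_3 \to \Omega^i_\mathcal{X}(C_3)$, set $X^i_2 := X^i_1 \oplus X^i_3$, and define the map $X^i_2 \to Y^i$ by combining the composite $X^i_1 \to \Omega^i_\mathcal{X}(C_1) \hookrightarrow Y^i$ with a lift of $X^i_3 \to \Omega^i_\mathcal{X}(C_3)$ across the surjection $Y^i \twoheadrightarrow \Omega^i_\mathcal{X}(C_3)$. The key point is that this lift exists because $\Ext^1_\Lambda(X^i_3, \Omega^i_\mathcal{X}(C_1)) = 0$, which follows from Wakamatsu's lemma applied to the minimal right $\mathcal{X}$-approximation $X^{i-1}_1 \twoheadrightarrow \Omega^{i-1}_\mathcal{X}(C_1)$ (valid since $\mathcal{X}$ is resolving, hence closed under extensions). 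The snake lemma then yields the next short exact sequence $(3\text{-}(i+1))$, and a Schanuel-type comparison between the middle row (an $\mathcal{X}$-resolution of $C_2$, not necessarily minimal) and an independently chosen minimal $\mathcal{X}$-resolution of $C_2$ identifies $Y^{i+1}$ as $\Omega^{i+1}_\mathcal{X}(C_2) \oplus X^{i+1}$ with $X^{i+1} \in \mathcal{X}$ absorbing the non-minimality of the middle column.

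For the $\mathcal{X}^{\perp}$-claim, I would iterate Wakamatsu's lemma along each minimal $\mathcal{X}$-resolution to get $\Ext^1$-vanishing at every syzygy, then invoke dimension shifting along the exact sequences $0 \to \Omega^{i+1}_\mathcal{X}(C_j) \to X^i_j \to \Omega^i_\mathcal{X}(C_j) \to 0$ for $j \in \{1,3\}$ to propagate the vanishing to all higher $\Ext$-groups, using that $X^i_j \in \mathcal{X}$ at each step. The $\mathcal{X}^{\perp}$-membership of $Y^i$ then follows from the two-out-of-three property applied to $(3\text{-}i)$. For the in-particular assertion, if $\Omega^n_\mathcal{X}(C_3) \in \mathcal{X}$, then $\Ext^1_\Lambda(\Omega^n_\mathcal{X}(C_3), \Omega^n_\mathcal{X}(C_1)) = 0$ by the $\mathcal{X}^{\perp}$-property of $\Omega^n_\mathcal{X}(C_1)$, forcing $(3\text{-}n)$ to split.

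The main obstacle will be the bookkeeping in the inductive horseshoe: ensuring $Y^i$ is \emph{precisely} $\Omega^i_\mathcal{X}(C_2) \oplus X^i$ with $X^i \in \mathcal{X}$ (rather than merely containing $\Omega^i_\mathcal{X}(C_2)$ as a summand) requires carefully relating the non-minimal middle row to the minimal $\mathcal{X}$-resolution of $C_2$, and confirming that the $\mathcal{X}$-approximation property transfers correctly at each level. Propagating the $\Ext$-vanishing through all positive degrees will also demand care, since it relies on the middle terms of the defining exact sequences lying in $\mathcal{X}$ and on iterated applications of Wakamatsu's lemma.
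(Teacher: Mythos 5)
Your overall strategy -- reconstructing the diagram by an $\X$-relative horseshoe argument and extracting the $\X^{\perp}$-statements from Wakamatsu's lemma -- is the natural way to prove this from scratch; the paper itself does essentially no work here and simply quotes Propositions 3.3(c) and 3.6 of Auslander--Reiten's \emph{Applications of contravariantly finite subcategories}, which are precisely these two facts. However, your construction has a genuine gap at the very first column. You build the $i$-th column by setting $X^i_2:=X^i_1\oplus X^i_3$ and lifting $X^i_3\to \Omega^{i}_{\X}(C_3)$ through $Y^i\twoheadrightarrow \Omega^{i}_{\X}(C_3)$, the lift being justified by $\Ext^1_\Lambda(X^i_3,\Omega^{i}_{\X}(C_1))=0$ via Wakamatsu. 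For $i\geqslant 1$ this is correct, but at $i=0$ the required vanishing is $\Ext^1_\Lambda(X^0_3,C_1)=0$, and $C_1$ is an arbitrary module, not the kernel of an approximation, so Wakamatsu gives nothing and the obstruction is genuinely nonzero in general. For instance, take $\Lambda=k[x]/(x^2)$, $\X=\mod\Lambda$ and the sequence $0\to k\to\Lambda\to k\to 0$: every minimal right $\X$-approximation is an identity map, a lift of ${\rm id}\colon k\to k$ through $\Lambda\twoheadrightarrow k$ would split the sequence, and the only admissible zeroth column is the non-split $0\to k\to\Lambda\to k\to 0$. So the zeroth column cannot be taken split (note the lemma only asserts the columns are exact). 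The repair is the Auslander--Reiten construction: pull back the given sequence along $X^0_3\to C_3$ to get a class in $\Ext^1_\Lambda(X^0_3,C_1)$ and lift it along $(f_1)_*\colon\Ext^1_\Lambda(X^0_3,X^0_1)\to\Ext^1_\Lambda(X^0_3,C_1)$, which is surjective because $\Ext^2_\Lambda(X^0_3,\Omega^{1}_{\X}(C_1))=0$; the resulting extension $0\to X^0_1\to X^0_2\to X^0_3\to 0$ has $X^0_2\in\X$ since $\X$ is closed under extensions.

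A second, smaller, defect: to upgrade $\Ext^1$-vanishing to full $\X^{\perp}$-membership you propose dimension shifting in the \emph{second} variable along $0\to\Omega^{i+1}_{\X}(C_j)\to X^i_j\to\Omega^{i}_{\X}(C_j)\to 0$. This does not close up: the long exact sequence leaves terms $\Ext^{k}_\Lambda(X,X^i_j)$ with both arguments in $\X$, which have no reason to vanish. The standard argument shifts in the \emph{first} variable instead: $\Ext^k_\Lambda(X,\Omega^{i}_{\X}(C_j))\cong\Ext^1_\Lambda(\Omega^{k-1}(X),\Omega^{i}_{\X}(C_j))$, and $\Omega^{k-1}(X)\in\X$ because $\X$ is resolving, so Wakamatsu applies again. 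The remaining ingredients of your argument -- the two-out-of-three property giving $Y^i\in\X^{\perp}$, the observation that $X^{i-1}_2\to Y^{i-1}$ is a right $\X$-approximation because its kernel kills $\Ext^1_\Lambda(\X,-)$ (whence $Y^i=\Omega^{i}_{\X}(C_2)\oplus X^i$ with $X^i\in\X$ by comparison with the minimal approximation), and the splitting of $(3\text{-}n)$ from $\Ext^1_\Lambda(\Omega^{n}_{\X}(C_3),\Omega^{n}_{\X}(C_1))=0$ -- are correct and agree with what the cited results of Auslander--Reiten provide.
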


\begin{proof}
Since $\X$ is a contravariantly finite and resolving subcategory of $\mod\Lambda$,
by \cite[Proposition 3.3(c)]{AuslanderApplications} we have minimal $\mathcal{X}$-resolutions
$$\cdots \to X^n_1\to X^{n-1}_1\to\cdots\to X^1_1\to X^0_1\to C_{1}\to 0,$$
$$\cdots \to X^n_3\to X^{n-1}_3\to\cdots\to X^1_3\to X^0_1\to C_{3}\to 0$$
of $C_1$ and $C_3$ respectively with all $\Omega^{i}_{\X}(C_1)$ and $\Omega^{i}_{\X}(C_3)$  are in $\mathcal{X}^{\bot}$.
Then by \cite[Proposition 3.6]{AuslanderApplications}, we get the commutative diagram as above such that
all $Y^i$ are in $\mathcal{X}^{\bot}$, where $Y^i=\Ker(X^{i-1}_2\to X^{i-2}_2)$ (note: $X^{-1}_2=C_2$). It follows that
the middle row in the above diagram is an $\mathcal{X}$-resolutions of $C_2$ and $Y^i=\Omega^{i}_{\X}(C_2)\oplus X^i$
with $X^i\in\mathcal{X}$ for any $i\geqslant 1$. In particular, we have the following exact sequence
$$0\to \Omega^{i}_{\X}(C_1) \to Y^i(=\Omega^{i}_{\X}(C_2)\oplus X^i)\to \Omega^{i}_{\X}(C_3)\to 0$$
for any $i\geqslant 1$, which induces an exact sequence
$$0\to \Hom_{\Lambda}(\Omega^{i}_{\X}(C_3),\Omega^{i}_{\X}(C_1))\to \Hom_{\Lambda}(\Omega^{i}_{\X}(C_3),Y^{i})
\to \Hom_{\Lambda}(\Omega^{i}_{\X}(C_3),\Omega^{i}_{\X}(C_3))\to\Ext^{1}_{\Lambda}(\Omega^{i}_{\X}(C_3),\Omega^{i}_{\X}(C_1)).$$
If $\Omega^{n}_{\X}(C_3)\in\mathcal{X}$ for some $n\geqslant 1$, then $\Ext^{1}_{\Lambda}(\Omega^{n}_{\X}(C_3),\Omega^{n}_{\X}(C_1))=0$
and the exact sequence $(3{\text -}n)$ splits.
\end{proof}

Let $\mathcal{B}$ be a subclass of $\mod \Lambda$.
If $\X$ is a contravariantly finite and resolving subcategory of $\mod\Lambda$,
then the {\bf $\X$-projective dimension} $\pd_{\X}\mathcal{B}$ of $\mathcal{B}$ is defined as
\begin{equation*}
\pd_{\X} \B=
\begin{cases}
\sup\{\pd_{\X} M\mid M\in \B\}, & \text{if} \;\; \B \neq\varnothing;\\
-1,&\text{if} \;\; \B =\varnothing.
\end{cases}
\end{equation*}
If $\X$ is a covariantly finite and coresolving subcategory of $\mod\Lambda$, then
the {\bf $\X$-injective dimension} $\id_{\X}\mathcal{B}$ of $\mathcal{B}$ is defined as
\begin{equation*}
\id_{\X} \B=
\begin{cases}
\sup\{\id_{\X} M\mid M\in \B\}, & \text{if} \;\; \B \neq\varnothing;\\
-1,&\text{if} \;\; \B =\varnothing.
\end{cases}
\end{equation*}

\begin{lemma}\label{lem-3.11}
Let $\mathcal{V}$ be a subset of $\mathcal{S}$ and $M\in \mathfrak{F}(\mathcal{V})$. Then we have
\begin{itemize}
\item[$(1)$] $\pd_{\X}M\leqslant\pd_{\X}\mathcal{V}$; in particular, $\pd_{\X}q_{t_{\mathcal{V}}}(M)\leqslant\pd_{\X}\mathcal{V}$.
\item[$(2)$] $\id_{\X}M\leqslant\id_{\X}\mathcal{V}$; in particular, $\id_{\X}q_{t_{\mathcal{V}}}(M)\leqslant\id_{\X}\mathcal{V}$.
\end{itemize}
\end{lemma}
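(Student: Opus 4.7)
The plan is to prove part (1) by induction on the length $m$ of a filtration $0 = M_0\subseteq M_1\subseteq\cdots\subseteq M_m=M$ with each $M_i/M_{i-1}$ isomorphic to some module in $\mathcal{V}$; part (2) will then follow by a dual argument using the coresolving hypothesis on $\mathcal{X}$. The two ``in particular'' statements are immediate from the fact, recalled earlier in the excerpt, that $q_{t_{\mathcal{V}}}(M)\in\mathfrak{F}(\mathcal{V})$ for every $M\in\mod\Lambda$, so I only need to treat an arbitrary $M\in\mathfrak{F}(\mathcal{V})$.

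Let $d:=\pd_{\X}\mathcal{V}$. If $d=\infty$ there is nothing to prove, so assume $d<\infty$. The base case $m=1$ is trivial since then $M\in\mathcal{V}$. For the inductive step, a filtration of length $m\geqslant 2$ gives a short exact sequence
\[
0\longrightarrow M_{m-1}\longrightarrow M\longrightarrow V\longrightarrow 0
\]
with $V\in\mathcal{V}$ and $M_{m-1}\in\mathfrak{F}(\mathcal{V})$ admitting a filtration of length $m-1$. By the induction hypothesis $\pd_{\X}M_{m-1}\leqslant d$, so $\Omega^{d}_{\X}(M_{m-1})\in\mathcal{X}$, and by definition of $d$ also $\Omega^{d}_{\X}(V)\in\mathcal{X}$.

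Now apply Lemma \ref{lem-3.10} to the displayed short exact sequence. The sequence $(3\text{-}d)$ produced there reads
\[
0\longrightarrow \Omega^{d}_{\X}(M_{m-1})\longrightarrow \Omega^{d}_{\X}(M)\oplus X^{d}\longrightarrow \Omega^{d}_{\X}(V)\longrightarrow 0
\]
with $X^{d}\in\mathcal{X}$, and the final clause of Lemma \ref{lem-3.10}(2) ensures that this sequence splits because $\Omega^{d}_{\X}(V)\in\mathcal{X}$. Therefore
\[
\Omega^{d}_{\X}(M)\oplus X^{d}\;\cong\;\Omega^{d}_{\X}(M_{m-1})\oplus\Omega^{d}_{\X}(V),
\]
and the right-hand side lies in $\mathcal{X}$. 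Since $\mathcal{X}$ is closed under direct summands, $\Omega^{d}_{\X}(M)\in\mathcal{X}$, which gives $\pd_{\X}M\leqslant d=\pd_{\X}\mathcal{V}$ and completes the inductive step. The dual statement (2) is proved verbatim with $\Omega^{-i}_{\X}$ in place of $\Omega^{i}_{\X}$, replacing Lemma \ref{lem-3.10} by its dual for covariantly finite coresolving subcategories.

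The main obstacle, and the reason Lemma \ref{lem-3.10} was isolated just before this lemma, is the splitting of the sequence relating the $d$-th $\mathcal{X}$-syzygies of $M_{m-1}$, $M$ and $V$: the ``comparison'' syzygy $Y^{d}$ for $M$ only differs from $\Omega^{d}_{\X}(M)$ by a summand in $\mathcal{X}$, and this discrepancy is precisely what allows closure of $\mathcal{X}$ under summands to be used after the splitting. Everything else is bookkeeping via the induction on filtration length.
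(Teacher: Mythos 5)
Your argument is correct and is essentially the paper's own proof written out in full: the paper simply invokes the filtration of $M$ together with Lemma \ref{lem-3.10}(2), and your induction on the filtration length, using the splitting of the sequence $(3\text{-}d)$ once $\Omega^{d}_{\X}(V)\in\mathcal{X}$ and the closure of $\mathcal{X}$ under direct summands, is exactly the intended mechanism. The only point to patch is the degenerate case $d=0$: Lemma \ref{lem-3.10}(2) produces $(3\text{-}i)$ only for $i\geqslant 1$, but there the conclusion $M\in\mathcal{X}$ follows at once from the closure of the resolving subcategory $\mathcal{X}$ under extensions.
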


\begin{proof}
(1) Let $M\in\mathfrak{F}\,(\mathcal{V})$. Then there exists a finite chain
$$0=M_0\subseteq M_1\subseteq \cdots\subseteq M_m=M$$ of submodules of $M$
such that each quotient $M_i / M_{i-1}$ is isomorphic to some module in $\mathcal{V}$. It follows from Lemma \ref{lem-3.10}(2) that
$\pd_{\X}M\leqslant \pd_{\X}\mathcal{V}$. In particular, $\pd_{\X}q_{t_{\mathcal{V}}}(M)\leqslant \pd_{\X}\mathcal{V}$
since $q_{t_{\mathcal{V}}}(M)\in\mathfrak{F}\,(\mathcal{V})$.

(2) It is dual to (1).
\end{proof}


Recall that a category $\mathcal{X}$ of $\mod\Lambda$ is said to be of {\bf finite type} if there are only finitely many
pairwise non-isomorphic indecomposable modules in $\mathcal{X}$.
Also recall that $\mathcal{S}$ denotes the set of all pairwise non-isomorphic simple modules in $\mod\Lambda$.
We are in a position to prove the following result.

\begin{theorem}\label{thm-3.12}
Let $\mathcal{V}$ be a subset of $\mathcal{S}$ and $\X$ a subcategory of $\mod\Lambda$ of finite type.
\begin{itemize}
\item[$(1)$] If $\X$ is resolving, then
$\extdim\Lambda\leqslant\pd_{\X}\mathcal{V}+\ell\ell^{t_{\mathcal{V}}}(\Lambda)$.
\item[$(2)$] If $\X$ is coresolving, then
$\extdim\Lambda\leqslant\id_{\X}\mathcal{V}+\ell\ell^{t_{\mathcal{V}}}(\Lambda)$.
\end{itemize}
\end{theorem}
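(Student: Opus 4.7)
My plan is to prove (1); part (2) follows by categorical duality, replacing projective $\X$-resolutions and cosyzygies with injective $\X$-coresolutions and syzygies (using Lemma \ref{lem-3.5}(1) in place of (2)). I will write $d := \pd_{\X}\mathcal{V}$, $n := \ell\ell^{t_{\V}}(\Lambda)$, $\V' := \mathcal{S}\setminus\mathcal{V}$, and pick $X\in\mod\Lambda$ with $\X=\add X$ (using the finite-type hypothesis on $\X$). Set
\[
T_1 := X \oplus \Omega^{-1}(X)\oplus\cdots\oplus \Omega^{-d}(X),\qquad
T_2 := \bigoplus_{S\in \V'} S,\qquad
T := T_1\oplus T_2.
\]
The goal is to show $\mod\Lambda\subseteq [T]_{d+n+1}$, equivalent to $\extdim\Lambda\leq d+n$. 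As an auxiliary step, I will first establish $\mathfrak{F}(\V)\subseteq [T_1]_{d+1}$: for $N\in\mathfrak{F}(\V)$, Lemma \ref{lem-3.11}(1) gives $\pd_{\X}N\leq d$, so $N$ admits a minimal $\X$-resolution $0\to X^d\to\cdots\to X^0\to N\to 0$ with each $X^i\in\add X$; then Lemma \ref{lem-3.5}(2) together with Lemma \ref{lem-2.2} and the inclusions $[X^i]_1\subseteq [X]_1$, $[\Omega^{-j}(X^i)]_1\subseteq [\Omega^{-j}(X)]_1$ place $N$ in $[T_1]_{d+1}$.

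For a general $M\in\mod\Lambda$, I will use the filtration by the subobjects $F^i_{t_{\V}}(M)$. By Lemma \ref{lem-3.8}(2), $F^n_{t_{\V}}(M)\in\mathfrak{F}(\V)$, so it lies in $[T_1]_{d+1}$ by the auxiliary step. Iterating the two short exact sequences of Lemma \ref{lem-3.7} for $0\leq i<n$ yields the decomposition
\[
F^i_{t_{\V}}(M)\in [F^{i+1}_{t_{\V}}(M)]_1\bullet [\top t_{\V}F^i_{t_{\V}}(M)]_1\bullet [q_{t_{\V}}F^i_{t_{\V}}(M)]_1,
\]
in which $\top t_{\V}F^i_{t_{\V}}(M)\in\add\V'\subseteq [T_2]_1$ and $q_{t_{\V}}F^i_{t_{\V}}(M)\in\mathfrak{F}(\V)\subseteq [T_1]_{d+1}$.

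The main obstacle will be the final amalgamation: applying Lemma \ref{lem-2.2} crudely to each of the $2n+1$ resulting factors only yields the inflated bound $[T]_{(n+1)(d+1)+n}$, much larger than the desired $d+n+1$. To recover the sharp bound I will proceed in two stages. First, I will treat the simpler case where $M\in\T_{\V}$ and $F^i_{t_{\V}}(M)\in\T_{\V}$ for every $0\leq i<n$: here every $q_{t_{\V}}F^i_{t_{\V}}(M)$ vanishes, each layer $F^i_{t_{\V}}(M)/F^{i+1}_{t_{\V}}(M)$ is just the semisimple top $\top F^i_{t_{\V}}(M)\in [T_2]_1$, and Lemma \ref{lem-2.2} gives $M\in [T_1]_{d+1}\bullet [T_2]_n\subseteq [T]_{d+n+1}$ directly. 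Second, I will reduce the general case to this by applying Lemma \ref{lem-3.10} to the canonical torsion sequence $0\to t_{\V}(M)\to M\to q_{t_{\V}}(M)\to 0$ and, inductively, to the analogous sequence at each layer: this splices the $\X$-resolutions of all the torsion-free quotients $q_{t_{\V}}F^i_{t_{\V}}(M)$ together with that of $F^n_{t_{\V}}(M)$ into a single $\X$-resolution of length at most $d$, absorbing every torsion-free contribution into one $[T_1]_{d+1}$ factor while leaving each of the $n$ torsion layers to contribute only $[T_2]_1$, and so yielding $M\in [T]_{d+n+1}$ as required.
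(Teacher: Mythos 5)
Your setup, your auxiliary step ($\mathfrak{F}(\V)\subseteq[T_1]_{d+1}$ via Lemma \ref{lem-3.11}(1) and Lemma \ref{lem-3.5}(2)), and your first special case are all correct, and you have correctly located the crux: the filtration of $M$ by the $F^i_{t_{\V}}(M)$ has $n+1$ interleaved torsion-free layers, each a priori costing $d+1$, and these must somehow be collapsed to a single cost of $d+1$. But the second stage, which is where the theorem actually gets proved, is not an argument. ``Splicing the $\X$-resolutions of all the torsion-free quotients together with that of $F^n_{t_{\V}}(M)$ into a single $\X$-resolution of length at most $d$'' has no meaning as stated: those quotients are subquotients of $M$ sitting at alternating levels of the filtration, separated by the semisimple layers $\top t_{\V}F^i_{t_{\V}}(M)$, and they do not assemble into one subquotient of $M$ whose complementary layers are exactly the torsion ones (layers of a filtration cannot be reordered). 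What the horseshoe construction of Lemma \ref{lem-3.10} actually produces is an $\X$-resolution of $M$ itself, and that resolution does not terminate at step $d$, because the layers $\top t_{\V}F^i_{t_{\V}}(M)$ may have infinite $\X$-projective dimension.

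The correct completion --- and this is the paper's key move, absent from your proposal --- is to apply $\Omega^{d+1}_{\X}$ to the two exact sequences of Lemma \ref{lem-3.7}. Since $\Omega^{d+1}_{\X}$ annihilates every torsion-free layer (by Lemma \ref{lem-3.11}(1) together with the splitting criterion in Lemma \ref{lem-3.10}(2)), one obtains $[\Omega^{d+1}_{\X}(M)]_1\subseteq[\Omega^{d+1}_{\X}(\Lambda/\rad\Lambda)]_n$; the $n+1$ torsion-free contributions vanish rather than being ``absorbed''. One then recovers $M$ by applying Lemma \ref{lem-3.5}(2) to the truncated resolution $0\to\Omega^{d+1}_{\X}(M)\to X_d\to\cdots\to X_0\to M\to 0$, which costs the single factor $[T_1]_{d+1}$ plus one extra factor $[\Omega^{-(d+1)}(\Omega^{d+1}_{\X}(M))]_1$, and the latter is controlled by Lemma \ref{lem-3.6}, which transports the syzygy-level containment through $\Omega^{-(d+1)}$ at the harmless cost $1\cdot n=n$. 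This forces the second summand of the generator to be $\Omega^{-(d+1)}(\Omega^{d+1}_{\X}(\Lambda/\rad\Lambda))$ rather than $\bigoplus_{S\in\V'}S$: the leftover piece is built from cosyzygies of $\X$-syzygies of the simples, which bears no relation to the simples themselves. So both the mechanism and the generator in your final step need to be replaced.
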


\begin{proof}
Set $\ell\ell^{t_{\mathcal{V}}}(\Lambda)=n$. Since $\X$ is of finite type, we have that $\X$ is
contravariantly and covariantly finite and $\X=\add X$ for some $X\in \mod \Lambda$.

If $n=0$, that is, $\ell\ell^{t_{\mathcal{V}}}(\Lambda)=0$, then
$M\cong q_{t_\mathcal{V}}(M)$ by Lemmas \ref{lem-3.9} and \ref{lem-3.8}(1),
and hence $\pd_{\X}M=\pd_{\X}q_{t_{\mathcal{V}}}(M)\leqslant \pd_{\X}\mathcal{V}$ and
$\id_{\X}M=\id_{\X}q_{t_{\mathcal{V}}}(M)\leqslant \id_{\X}\mathcal{V}$ by Lemma \ref{lem-3.11}.
The case for $n=0$ is proved. Now suppose $n\geqslant 1$.

$(1)$ Let $\pd_{\X} \mathcal{V}=p<\infty$. By Lemma \ref{lem-3.11}(1), we have
$\pd_{\X} q_{t_{\mathcal{V}}}F^{i}_{t_{\mathcal{V}}}(M)\leqslant p$ and
$\Omega^{p+1}_{\X} (q_{t_{\mathcal{V}}}F^{i}_{t_{\mathcal{V}}}(M))=0$
for any $0\leqslant i\leqslant n-1$.
By Lemma \ref{lem-3.8}(2), we have $F^{n}_{t_{\mathcal{V}}}(M)\in\mathfrak{F}(\mathcal{V})$.
It follows from Lemma \ref{lem-3.11}(1) that
$\pd_{\X} F^{n}_{t_{\mathcal{V}}}(M) \leqslant p$. Thus $\Omega^{p}_{\X}(F^{n}_{t_{\mathcal{V}}}(M))\in\X$
and $\Omega^{p+1}_{\X}(F^{n}_{t_{\mathcal{V}}}(M))=0$.

By Lemmas \ref{lem-3.7} and \ref{lem-3.10}, we have
\begin{align}\label{iso1}
\Omega^{p+1}_{\X}(t_{\mathcal{V}}F^{i}_{t_{\mathcal{V}}}(M))\cong\Omega^{p+1}_{\X}(F^{i}_{t_{\mathcal{V}}}(M))\oplus X_{i},
\end{align}
\begin{align}\label{iso2}
0 \rightarrow \Omega^{p+1}_{\X}(F^{i+1}_{t_{\mathcal{V}}}(M))\rightarrow
\Omega^{p+1}_{\X}(t_{\mathcal{V}}F^{i}_{t_{\mathcal{V}}}(M))\oplus X'_{i}
\rightarrow \Omega^{p+1}_{\X}(\top t_{\mathcal{V}}F^{i}_{t_{\mathcal{V}}}(M))\rightarrow 0\ \ {\rm (exact)}
\end{align}
with $X_{i},X'_{i}\in \add X=\X$ for any $0\leqslant i\leqslant n-1$.
In particular, when $i=n-1$ in (\ref{iso2}), we have
\begin{align}\label{iso5}
\Omega^{p+1}_{\X}(t_{\mathcal{V}}F^{n-1}_{t_{\mathcal{V}}}(M))\oplus X'_{n-1}\cong
\Omega^{p+1}_{\X}(\top t_{\mathcal{V}}F^{n-1}_{t_{\mathcal{V}}}(M)).
\end{align}
It follows that
\begin{align*}
[\Omega^{p+1}_{\X}(M)]_{1}\subseteq&[\Omega^{p+1}_{\X}(t_{\mathcal{V}}(M))]_{1} \ \ \ \ \text{(putting $i=0$ in (\ref{iso1}))}\\
\subseteq &[\Omega^{p+1}_{\X}(F_{t_{\mathcal{V}}}(M))]_{1}\bullet[\Omega^{p+1}_{\X}(\top t_{\mathcal{V}}(M))]_{1}
\ \ \ \ \text{(putting $i=0$ in (\ref{iso2}))}\\
\subseteq &[\Omega^{p+1}_{\X}(t_{\mathcal{V}}F_{t_{\mathcal{V}}}(M))]_{1}\bullet[\Omega^{p+1}_{\X}(\top t_{\mathcal{V}}(M))]_{1}
\ \ \ \ \text{(putting $i=1$ in (\ref{iso1}))}\\
\subseteq&[\Omega^{p+1}_{\X}(t_{\mathcal{V}}F_{t_{\mathcal{V}}}(M))]_{1}\bullet[\Omega^{p+1}_{\X}(\Lambda/\rad\Lambda)]_{1}.
\end{align*}
By replacing $M$ with $F^{i}_{t_{\mathcal{V}}}(M)$ for any $1 \leqslant i \leqslant n-1$ and iterating the above process, we have
\begin{align*}
[\Omega^{p+1}_{\X}(M)]_{1}
\subseteq &[\Omega^{p+1}_{\X}(t_{\mathcal{V}}F^{n-1}_{t_{\mathcal{V}}}(M))]_{1} \bullet
[\Omega^{p+1}_{\X}(\Lambda/\rad\Lambda)]_{n-1} \ \ \ \ \\ 
\subseteq &[\Omega^{p+1}_{\X}(\top t_{\mathcal{V}}F^{n-1}_{t_{\mathcal{V}}}(M))]_{1} \bullet
[\Omega^{p+1}_{\X}(\Lambda/\rad\Lambda)]_{n-1} \ \ \ \ \text{(by (\ref{iso5}))}\\
\subseteq &[\Omega^{p+1}_{\X}(\Lambda/\rad\Lambda)]_{1} \bullet [\Omega^{p+1}_{\X}(\Lambda/\rad\Lambda)]_{n-1} \ \ \ \ \\
\subseteq &[\Omega^{p+1}_{\X}(\Lambda/\rad\Lambda)]_{n}. \ \ \ \ \text{(by Lemma \ref{lem-2.2})}
\ \ \ \ \ \ \ \ \ \ \ \ \ \ \ \ \ \ \ \ \ \ \
\ \ \ \ \ \ \ \ \ \ \ \ \ \ \ \ \ \ \ \ \ \ \ \ \ \ \ \ \ \ \ \ \ \ \ \ \ (3.4)
\end{align*}

Consider the following exact sequence
$$0 \longrightarrow\Omega^{p+1}_{\X}(M) \longrightarrow X_{ p }\longrightarrow X_{p-1}
\longrightarrow\cdots \longrightarrow X_{1} \longrightarrow X_{0} \longrightarrow M \longrightarrow 0$$
in $\mod \La$ with all $X_{i}$ in $\add X=\X$.
Thus we have
\begin{align*}
[M]_{1}\subseteq & [X_{0}]_{1}\bullet[\Omega^{-1}(X_{1})]_{1}\bullet\cdots\bullet[\Omega^{-p}(X_{p})]_{1}\bullet
[\Omega^{-(p+1)}(\Omega^{p+1}_{\X}(M))]_{1} \ \ \ \ \text{(by Lemma \ref{lem-3.5}(2))}\\
\subseteq &[\oplus_{i=0}^{p} \Omega^{-i}(X)]_{p+1}\bullet[\Omega^{-(p+1)}(\Omega^{p+1}_{\X}(M))]_{1} \ \ \ \ \text{(by Lemma \ref{lem-2.2})}\\
\subseteq&[\oplus_{i=0}^{p}\Omega^{-i}(X)]_{p+1}\bullet[\Omega^{-(p +1)}(\Omega^{p+1}_{\X}(\Lambda/\rad \Lambda))]_{n}
\ \ \ \ \ \text{(by (3.4) and Lemma \ref{lem-3.6}(1)) }\\
\subseteq&[\oplus_{i=0}^{p} \Omega^{-i}(X)\oplus \Omega^{-(p+1)}(\Omega^{p+1}_{\X}(\Lambda/\rad \Lambda))]_{p +1+n}.
\ \ \ \ \text{(by Lemma \ref{lem-2.2})}
\end{align*}
It follows that
$$\mod\Lambda=[\oplus_{i=0}^{p}\Omega^{-i}(X) \oplus \Omega^{-(p+1)}(\Omega^{p+1}_{\X}(\Lambda/\rad \Lambda))]_{p+1+n}$$
and $\extdim\Lambda\leqslant p+n$.

$(2)$ The proof is dual to that of (1), but we still give it here for the readers' convenience.

Let $\id_{\X} \mathcal{V}=p<\infty$. By Lemma \ref{lem-3.11}(2), we have
$\id_{\X} q_{t_{\mathcal{V}}}F^{i}_{t_{\mathcal{V}}}(M)\leqslant p$ and
$\Omega^{-(p+1)}_{\X} (q_{t_{\mathcal{V}}}F^{i}_{t_{\mathcal{V}}}(M))=0$
for any $0\leqslant i\leqslant n-1$.
By Lemma \ref{lem-3.8}(2), we have $F^{n}_{t_{\mathcal{V}}}(M)\in\mathfrak{F}(\mathcal{V})$. Then by Lemma \ref{lem-3.11}(2),
we have $\id_{\X} F^{n}_{t_{\mathcal{V}}}(M) \leqslant p$. Thus $\Omega^{-p}_{\X}(F^{n}_{t_{\mathcal{V}}}(M))\in\X$
and $\Omega^{-(p+1)}_{\X}(F^{n}_{t_{\mathcal{V}}}(M))=0$.

By Lemma \ref{lem-3.7} and the dual of Lemma \ref{lem-3.10}, we have
$$\Omega^{-(p+1)}_{\X}(t_{\mathcal{V}}F^{i}_{t_{\mathcal{V}}}(M))\cong
\Omega^{-(p+1)}_{\X}(F^{i}_{t_{\mathcal{V}}}(M))\oplus X_{i},\eqno{(3.5)}$$
$$0 \rightarrow \Omega^{-(p+1)}_{\X}(F^{i+1}_{t_{\mathcal{V}}}(M))\rightarrow
\Omega^{-(p+1)}_{\X}(t_{\mathcal{V}}F^{i}_{t_{\mathcal{V}}}(M))\oplus X'_{i}
\rightarrow \Omega^{-(p+1)}_{\X}(\top t_{\mathcal{V}}F^{i}_{t_{\mathcal{V}}}(M))\rightarrow 0\ \ {\rm (exact)}\eqno{(3.6)}$$
with $X_{i},X'_{i}\in \add X=\X$ for any $0\leqslant i\leqslant n-1$.
In particular, when $i=n-1$ in (3.6), we have
$$\Omega^{-(p+1)}_{\X}(t_{\mathcal{V}}F^{n-1}_{t_{\mathcal{V}}}(M))\oplus X'_{n-1}\cong
\Omega^{-(p+1)}_{\X}(\top t_{\mathcal{V}}F^{n-1}_{t_{\mathcal{V}}}(M)).\eqno{(3.7)}$$
It follows that
\begin{align*}
[\Omega^{-(p+1)}_{\X}(M)]_{1}\subseteq&[\Omega^{-(p+1)}_{\X}(t_{\mathcal{V}}(M))]_{1} \ \ \ \ \text{(putting $i=0$ in (3.5))}\\
\subseteq &[\Omega^{-(p+1)}_{\X}(F_{t_{\mathcal{V}}}(M))]_{1}\bullet[\Omega^{-(p+1)}_{\X}(\top t_{\mathcal{V}}(M))]_{1}
\ \ \ \ \text{(putting $i=0$ in (3.6))}\\
\subseteq &[\Omega^{-(p+1)}_{\X}(t_{\mathcal{V}}F_{t_{\mathcal{V}}}(M))]_{1}\bullet[\Omega^{-(p+1)}_{\X}(\top t_{\mathcal{V}}(M))]_{1}
\ \ \ \ \text{(putting $i=1$ in (3.5))}\\
\subseteq&[\Omega^{-(p+1)}_{\X}(t_{\mathcal{V}}F_{t_{\mathcal{V}}}(M))]_{1}\bullet[\Omega^{-(p+1)}_{\X}(\Lambda/\rad\Lambda)]_{1}.
\end{align*}
By replacing $M$ with $F^{i}_{t_{\mathcal{V}}}(M)$ for any $1 \leqslant i \leqslant n-1$ and iterating the above process, we have
\begin{align*}
[\Omega^{-(p+1)}_{\X}(M)]_{1}
\subseteq &[\Omega^{-(p+1)}_{\X}(t_{\mathcal{V}}F^{n-1}_{t_{\mathcal{V}}}(M))]_{1} \bullet
[\Omega^{-(p+1)}_{\X}(\Lambda/\rad\Lambda)]_{n-1} \ \ \ \ \\ 
\subseteq &[\Omega^{-(p+1)}_{\X}(\top t_{\mathcal{V}}F^{n-1}_{t_{\mathcal{V}}}(M))]_{1} \bullet
[\Omega^{-(p+1)}_{\X}(\Lambda/\rad\Lambda)]_{n-1} \ \ \ \ \text{(by (3.7))}\\
\subseteq &[\Omega^{-(p+1)}_{\X}(\Lambda/\rad\Lambda)]_{1} \bullet [\Omega^{-(p+1)}_{\X}(\Lambda/\rad\Lambda)]_{n-1} \ \ \ \ \\
\subseteq &[\Omega^{-(p+1)}_{\X}(\Lambda/\rad\Lambda)]_{n}. \ \ \ \ \text{(by Lemma \ref{lem-2.2})}
\ \ \ \ \ \ \ \ \ \ \ \ \ \ \ \ \ \ \ \ \ \ \
\ \ \ \ \ \ \ \ \ \ \ \ \ \ \ \ \ \ \ \ \ \ \ \ \ \ \ \ \ \ \ \ \ \ \ \ \ (3.8)
\end{align*}

Consider the following exact sequence
$$0\to M\to X^0\to X^1\to\cdots\to X^p\to\Omega^{-(p+1)}_{\X}(M)\to 0$$
in $\mod \La$ with all $X^{i}$ in $\add X=\X$.
Thus we have
\begin{align*}
[M]_{1}\subseteq & [X^{0}]_{1}\bullet[\Omega^{1}(X^{1})]_{1}\bullet\cdots\bullet[\Omega^{p}(X^{p})]_{1}\bullet
[\Omega^{p+1}(\Omega^{-(p+1)}_{\X}(M))]_{1} \ \ \ \ \text{(by Lemma \ref{lem-3.5}(1))}\\
\subseteq &[\oplus_{i=0}^{p} \Omega^{i}(X)]_{p+1}\bullet[\Omega^{p+1}(\Omega^{-(p+1)}_{\X}(M))]_{1}\\
\subseteq&[\oplus_{i=0}^{p}\Omega^{i}(X)]_{p+1}\bullet[\Omega^{p+1}(\Omega^{-(p+1)}_{\X}(\Lambda/\rad \Lambda))]_{n}
\ \ \ \ \ \text{(by (3.8) and Lemma \ref{lem-3.6}(2)) }\\
\subseteq&[\oplus_{i=0}^{p} \Omega^{i}(X)\oplus \Omega^{p+1}(\Omega^{-(p+1)}_{\X}(\Lambda/\rad \Lambda))]_{p+1+n}.
\ \ \ \ \text{(by Lemma \ref{lem-2.2})}
\end{align*}
It follows that
$$\mod\Lambda=[\oplus_{i=0}^{p}\Omega^{i}(X) \oplus \Omega^{p+1}(\Omega^{-(p+1)}_{\X}(\Lambda/\rad \Lambda))]_{p+1+n}$$
and $\extdim\Lambda\leqslant p+n$.
\end{proof}

By using exactly the same method, it can be proved that Theorem \ref{thm-3.12} holds true
in the following more general case.

\begin{remark}{\rm \label{rem-3.13}
Let $(\mathcal{T},\mathcal{F})$ be a torsion pair in $\mod \Lambda$ and $t$ its torsion radical,
and let $\mathcal{X}$ be a subcategory of $\mod\Lambda$ of finite type.
\begin{itemize}
\item[$(1)$] If $\mathcal{X}$ is resolving, then
$\extdim\Lambda\leqslant\pd_{\mathcal{X}}\mathcal{F}+\ell\ell^{t}(\Lambda)$.
\item[$(2)$] If $\mathcal{X}$ is coresolving, then
$\extdim\Lambda\leqslant\id_{\mathcal{X}}\mathcal{F}+\ell\ell^{t}(\Lambda)$.
\end{itemize}}
\end{remark}

\subsection{Some applications}

\begin{corollary}\label{cor-3.14}
Let $\X$ be a subcategory of $\mod\Lambda$ of finite type.
\begin{itemize}
\item[$(1)$] If $\X$ is resolving, then $\extdim\Lambda\leqslant\pd_{\X}\mathcal{S}$.
\item[$(2)$] If $\X$ is coresolving, then $\extdim\Lambda\leqslant\id_{\X}\mathcal{S}$.
\end{itemize}
\end{corollary}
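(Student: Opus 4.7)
The plan is to derive this corollary as the special case $\mathcal{V}=\mathcal{S}$ of Theorem \ref{thm-3.12}. First I would observe that taking $\mathcal{V}$ to be all of $\mathcal{S}$ is permissible, since $\mathcal{S}$ is itself a subset of $\mathcal{S}$; the whole machinery of the torsion pair $(\mathcal{T}_{\mathcal{V}},\mathfrak{F}(\mathcal{V}))$, the torsion radical $t_{\mathcal{V}}$, and the $t_{\mathcal{V}}$-radical layer length applies without change in this case.

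Next I would use Lemma \ref{lem-3.9} to eliminate the radical-layer-length term from the bound of Theorem \ref{thm-3.12}. Indeed, when $\mathcal{V}=\mathcal{S}$, the equivalence $(1)\Leftrightarrow(2)$ in Lemma \ref{lem-3.9} gives $\ell\ell^{t_{\mathcal{S}}}(\Lambda)=0$. Substituting $\mathcal{V}=\mathcal{S}$ into Theorem \ref{thm-3.12}(1) then yields
\[
\extdim\Lambda\leqslant\pd_{\X}\mathcal{S}+\ell\ell^{t_{\mathcal{S}}}(\Lambda)=\pd_{\X}\mathcal{S},
\]
which proves part (1). Part (2) follows in exactly the same way from Theorem \ref{thm-3.12}(2), replacing $\pd_{\X}$ by $\id_{\X}$.

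There is essentially no obstacle here: the corollary is a direct specialization, and the only content beyond citing Theorem \ref{thm-3.12} is the observation that $\ell\ell^{t_{\mathcal{S}}}(\Lambda)$ vanishes, which is handed to us by Lemma \ref{lem-3.9}. One small point worth flagging in the write-up is that the hypothesis that $\X$ be of finite type (hence contravariantly finite and resolving in (1), or covariantly finite and coresolving in (2)) is inherited directly from Theorem \ref{thm-3.12}, so no additional verification is needed.
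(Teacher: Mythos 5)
Your proposal is correct and is exactly the paper's argument: the authors also obtain the corollary by specializing Theorem \ref{thm-3.12} to $\mathcal{V}=\mathcal{S}$ and invoking Lemma \ref{lem-3.9} to see that $\ell\ell^{t_{\mathcal{S}}}(\Lambda)=0$. Nothing further is needed.
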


\begin{proof}
It follows from Theorem \ref{thm-3.12} and Lemma \ref{lem-3.9}.
\end{proof}

If $\mathcal{X}$ is the subcategory of $\mod\Lambda$ consisting of projective (resp. injective) modules, then
the $\mathcal{X}$-projective dimension $\pd_\mathcal{X}M$ (resp. $\mathcal{X}$-injective dimension $\id_\mathcal{X}M$)
of a module $M$ in $\mod\Lambda$ is exactly its projective dimension $\pd M$ (resp. injective dimension $\id M$).
In this case, for a subclass of $\mod\Lambda$, we write
$$\pd \mathcal{B}:=\pd_\mathcal{X}\mathcal{B}\ {\rm and}\ \id \mathcal{B}:=\id_\mathcal{X}\mathcal{B}.$$

\begin{corollary}\label{cor-3.15}
\begin{itemize}
\item[]
\item[$(1)$] $\derdim\Lambda \leqslant 2\extdim\Lambda+1$.
\item[$(2)$] For any subset $\mathcal{V}$ of $\mathcal{S}$, we have
\begin{itemize}
\item[$(2.1)$] $\extdim\Lambda \leqslant \min\{\pd\mathcal{V}, \id\mathcal{V}\}+\ell\ell^{t_{\mathcal{V}}}(\Lambda)$.
\item[$(2.2)$] $\derdim\Lambda \leqslant 2(\min\{\pd\mathcal{V}, \id\mathcal{V}\}+\ell\ell^{t_{\mathcal{V}}}(\Lambda))+1$.
\end{itemize}
\item[$(3)$] {\rm (\cite[4.5.1(3)]{iyama2003rejective})} $\extdim\Lambda \leqslant\gldim\Lambda$.
\end{itemize}
\end{corollary}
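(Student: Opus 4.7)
The plan is to derive all three parts as essentially immediate consequences of the preceding results, with part (2.1) being the only substantive step.

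For part (1), I would simply specialize Theorem \ref{thm-3.3} to the abelian category $\mathcal{A}=\mod\Lambda$: the chain of inequalities there yields $\derdim\Lambda\leqslant 2\extdim\Lambda+1$ directly from the definitions $\extdim\Lambda=\extdim\mod\Lambda$ and $\derdim\Lambda=\derdim\mod\Lambda$.

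For part (2.1), the idea is to apply Theorem \ref{thm-3.12} twice, once with $\mathcal{X}$ chosen as the subcategory $\proj\Lambda$ of finitely generated projective modules and once with $\mathcal{X}=\text{inj}\,\Lambda$. Both are of finite type (since $\Lambda$ is an artin algebra there are only finitely many indecomposable projectives/injectives up to isomorphism), $\proj\Lambda$ is resolving, and $\text{inj}\,\Lambda$ is coresolving. In these two cases the relative projective (resp.\ injective) dimension $\pd_{\mathcal{X}}M$ (resp.\ $\id_{\mathcal{X}}M$) agrees with the ordinary $\pd M$ (resp.\ $\id M$) as noted immediately before the corollary. Consequently Theorem \ref{thm-3.12}(1) yields $\extdim\Lambda\leqslant\pd\mathcal{V}+\ell\ell^{t_{\mathcal{V}}}(\Lambda)$ and Theorem \ref{thm-3.12}(2) yields $\extdim\Lambda\leqslant\id\mathcal{V}+\ell\ell^{t_{\mathcal{V}}}(\Lambda)$, and taking the minimum of the two gives (2.1). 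Part (2.2) is then immediate by combining (2.1) with (1).

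For part (3), I would specialize (2.1) to the choice $\mathcal{V}=\mathcal{S}$. By the equivalence $(1)\Leftrightarrow(2)$ in Lemma \ref{lem-3.9}, this choice forces $\ell\ell^{t_{\mathcal{S}}}(\Lambda)=0$, so (2.1) collapses to $\extdim\Lambda\leqslant\min\{\pd\mathcal{S},\id\mathcal{S}\}$. Since every module in $\mod\Lambda$ has a composition series with simple factors, $\pd\mathcal{S}\leqslant\gldim\Lambda$ (in fact equality holds), giving $\extdim\Lambda\leqslant\gldim\Lambda$.

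No serious obstacle is anticipated; the only thing to double-check is the identification of the relative homological dimensions with the ordinary ones when $\mathcal{X}=\proj\Lambda$ or $\mathcal{X}=\text{inj}\,\Lambda$, and the hypothesis that these subcategories are of finite type, both of which are standard for artin algebras.
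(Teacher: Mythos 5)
Your proposal is correct and follows essentially the same route as the paper: (1) is Theorem \ref{thm-3.3} specialized to $\mathcal{A}=\mod\Lambda$, (2.1) is Theorem \ref{thm-3.12} applied with $\mathcal{X}$ the projective and injective modules (using the identification of the relative dimensions with the ordinary ones), (2.2) combines (1) and (2.1), and (3) is the specialization $\mathcal{V}=\mathcal{S}$ via Lemma \ref{lem-3.9}, which is exactly the content of the paper's Corollary \ref{cor-3.14} that it cites for this part. No gaps.
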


\begin{proof}
The assertion (1) is a direct consequence of Theorem \ref{thm-3.3}. The assertion (2.1) follows from Theorem \ref{thm-3.12},
and (2.2) follows from (1) and (2.1). Since $\gldim\Lambda=\pd\mathcal{S}=\id\mathcal{S}$, the assertion (3) is a special case
of Corollary \ref{cor-3.14}.
\end{proof}

\begin{corollary}\label{cor-3.16} {\rm (\cite[Theorem]{han2009derived})}
If $\Lambda$ is of finite representation type, then $\derdim\Lambda\leqslant 1$.
\end{corollary}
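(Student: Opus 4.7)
The plan is to recover Han's theorem as a direct corollary of the machinery just built, namely Corollary \ref{cor-3.14} and Corollary \ref{cor-3.15}(1). The key observation is that finite representation type is tailor-made for the finite-type hypothesis appearing in Corollary \ref{cor-3.14}: if $\Lambda$ has finite representation type then there are, up to isomorphism, only finitely many indecomposable objects in $\mod\Lambda$, so $\mod\Lambda$ itself qualifies as a subcategory of $\mod\Lambda$ of finite type.

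The first step is to choose $\mathcal{X}:=\mod\Lambda$. Trivially $\mathcal{X}$ is both resolving and coresolving (it contains everything and is closed under kernels, cokernels, and extensions), and by the observation above it is of finite type. Since every object $M\in\mod\Lambda$ lies in $\mathcal{X}$, we have $\Omega^{0}_{\mathcal{X}}(M)=M\in\mathcal{X}$, so $\pd_{\mathcal{X}}M=0$ for every $M$; in particular $\pd_{\mathcal{X}}\mathcal{S}=0$.

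The second step is to invoke Corollary \ref{cor-3.14}(1) with this choice of $\mathcal{X}$ to conclude
\[
\extdim\Lambda \;\leqslant\; \pd_{\mathcal{X}}\mathcal{S} \;=\; 0,
\]
so $\extdim\Lambda=0$. Finally, Corollary \ref{cor-3.15}(1) yields
\[
\derdim\Lambda \;\leqslant\; 2\extdim\Lambda+1 \;=\; 1,
\]
which is the claimed bound.

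There is no real obstacle: once the dictionary $\mathcal{X}=\mod\Lambda$ is set up, both corollaries apply mechanically. The only thing worth flagging is the degenerate case where $\Lambda$ is semisimple, in which case $\mod\Lambda$ actually consists of semisimple modules and $\extdim\Lambda=0$ with $\derdim\Lambda=0\leqslant 1$, so the inequality still holds. Thus the entire argument consists of observing that $\mod\Lambda$ itself witnesses the finite-type hypothesis and then chaining together the two corollaries above.
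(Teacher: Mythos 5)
Your argument is correct. The final step is exactly the paper's: both proofs establish $\extdim\Lambda=0$ and then apply Corollary \ref{cor-3.15}(1) (i.e., Theorem \ref{thm-3.3}) to get $\derdim\Lambda\leqslant 2\cdot 0+1=1$. Where you differ is in how $\extdim\Lambda=0$ is obtained: the paper simply quotes the (essentially definitional) equivalence from Beligiannis that $\Lambda$ is of finite representation type if and only if $\extdim\Lambda=0$ --- indeed, if $A$ is the direct sum of the finitely many indecomposables, then $\mod\Lambda=\add A=[A]_1$ directly from Definition \ref{def-2.1}. You instead take $\mathcal{X}=\mod\Lambda$, note it is resolving, of finite type, and that $\pd_{\mathcal{X}}\mathcal{S}=0$, and invoke Corollary \ref{cor-3.14}(1). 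This is valid (finite type gives contravariant finiteness, so the relative projective dimension is defined, and $\Omega^0_{\mathcal{X}}(M)=M\in\mathcal{X}$ forces $\pd_{\mathcal{X}}M=0$), but it routes a one-line observation through the full machinery of Theorem \ref{thm-3.12}; the paper's citation is the more economical path, while yours has the mild virtue of showing that the relative-dimension framework of Section 3 specializes consistently to the finite-type case.
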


\begin{proof}
It is easy to see that $\Lambda$ is of finite representation type if and only if $\extdim\Lambda=0$
(\cite[Example 1.6(i)]{beligiannis2008some}). Now the assertion follows from Corollary \ref{cor-3.15}(1).
\end{proof}

For any $n\geqslant 0$, recall from \cite{WJQ09F} that $\Lambda$
is called {\bf $n$-Igusa-Todorov} if there exists $U\in\mod \Lambda$
such that for any $M\in\mod \Lambda$, there exists an exact sequence
$$0 \rightarrow U_{1}\longrightarrow U_{0}\rightarrow \Omega^{n}(M)\oplus P\rightarrow 0$$
in $\mod \Lambda$ with $U_{1}$, $U_{0}\in\add U$ and $P$ projective.
The class of Igusa-Todorov algebras includes algebras with representation dimension
at most 3, algebras with radical cube zero, monomial algebras, left serial algebras and syzygy finite algebras
(\cite{WJQ09F}).

\begin{corollary}\label{cor-3.17}
\begin{itemize}
\item[]
\item[$(1)$]
If $\Lambda$ is an $n$-Igusa-Todorov algebra, then $\derdim\Lambda\leqslant 2n+3$.
\item[$(2)$] $\derdim\Lambda\leqslant 5$ if $\Lambda$ is one class of the following algebras.
\begin{itemize}
\item[$(2.1)$] monomial algebras;
\item[$(2.2)$] left serial algebras;
\item[$(2.3)$] $\rad^{2n+1}\Lambda=0$ and $\Lambda/\rad^n\Lambda$ is of finite representation type;
\item[$(2.4)$] 2-syzygy finite algebras.
\end{itemize}
\end{itemize}
\end{corollary}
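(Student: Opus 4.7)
The plan is to prove (1) by combining Corollary~\ref{cor-3.15}(1) with the bound $\extdim \Lambda \leqslant n+1$, and then to prove (2) by showing each of the four listed classes is $1$-Igusa-Todorov and applying (1) with $n = 1$, which produces the bound $2 \cdot 1 + 3 = 5$.

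For (1), fix an $n$-Igusa-Todorov witness $U \in \mod \Lambda$ and a module $M \in \mod \Lambda$. Take a projective resolution $\cdots \to P_{n-1} \to \cdots \to P_0 \to M \to 0$ of $M$, and let $0 \to U_1 \to U_0 \to \Omega^n(M) \oplus P \to 0$ be the defining Igusa-Todorov sequence with $U_0, U_1 \in \add U$ and $P$ projective. The key construction is to form the composite $U_0 \twoheadrightarrow \Omega^n(M) \oplus P \twoheadrightarrow \Omega^n(M)$; a direct snake-lemma calculation identifies its kernel $K$ as fitting in a short exact sequence $0 \to U_1 \to K \to P \to 0$, which splits since $P$ is projective, so $K \cong U_1 \oplus P$. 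Splicing the resulting short exact sequence $0 \to U_1 \oplus P \to U_0 \to \Omega^n(M) \to 0$ with $0 \to \Omega^n(M) \to P_{n-1} \to \cdots \to P_0 \to M \to 0$ produces the exact sequence
$$0 \to U_1 \oplus P \to U_0 \to P_{n-1} \to \cdots \to P_0 \to M \to 0,$$
in which every term other than $M$ lies in $\add(U \oplus \Lambda)$.

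Applying Lemma~\ref{lem-3.5}(2) to this sequence gives
$$M \in [P_0]_1 \bullet [\Omega^{-1}(P_1)]_1 \bullet \cdots \bullet [\Omega^{-(n-1)}(P_{n-1})]_1 \bullet [\Omega^{-n}(U_0)]_1 \bullet [\Omega^{-(n+1)}(U_1 \oplus P)]_1,$$
and replacing each $P_i$ by $\Lambda$ and each $U_i$ by $U$, followed by iterated use of Lemma~\ref{lem-2.2}, yields $M \in [T]_{n+2}$ with
$$T := \bigoplus_{i=0}^{n-1} \Omega^{-i}(\Lambda) \oplus \Omega^{-n}(U) \oplus \Omega^{-(n+1)}(U \oplus \Lambda)$$
a module depending only on $\Lambda$ and $U$. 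Consequently $\extdim \Lambda \leqslant n+1$, and Corollary~\ref{cor-3.15}(1) delivers $\derdim \Lambda \leqslant 2(n+1)+1 = 2n+3$.

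For (2), classes (2.1), (2.2) and (2.3) are known to be $1$-Igusa-Todorov by the classification in \cite{WJQ09F}. For (2.4), if $\Omega^2(\mod\Lambda) \subseteq \add V$ for some fixed module $V$, then for every $M$ the syzygy sequence $0 \to \Omega^2(M) \to P_1 \to \Omega(M) \to 0$ (with $P_1$ a projective cover of $\Omega(M)$) exhibits $\Lambda$ as $1$-Igusa-Todorov with witness $U := V \oplus \Lambda$ and vanishing trailing projective summand. The main obstacle I expect is the bookkeeping in (1): verifying exactness of the spliced sequence at $U_0$---i.e., that the kernel of $U_0 \twoheadrightarrow \Omega^n(M) \hookrightarrow P_{n-1}$ is precisely $U_1 \oplus P$---and tracking the cosyzygy indices $\Omega^{-i}$ correctly when invoking Lemma~\ref{lem-3.5}(2). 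Once the spliced sequence is established, the rest of (1) is a direct substitution into the previously developed $\bullet$-calculus, and (2) reduces to invoking existing literature together with the short direct argument for (2.4).
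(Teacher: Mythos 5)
Your part (1) is correct, and it is more self-contained than the paper's own argument: the paper simply quotes \cite[Proposition 3.15(2)]{zheng2019extension} for the bound $\extdim\Lambda\leqslant n+1$ and then applies Corollary \ref{cor-3.15}(1), whereas you reprove that bound from scratch. Your construction checks out: the kernel of the composite $U_0\twoheadrightarrow\Omega^n(M)\oplus P\twoheadrightarrow\Omega^n(M)$ is an extension of $P$ by $U_1$ and hence splits, the spliced sequence $0\to U_1\oplus P\to U_0\to P_{n-1}\to\cdots\to P_0\to M\to 0$ is exact (exactness at $U_0$ because $\Omega^n(M)\hookrightarrow P_{n-1}$ is monic, at $P_{n-1}$ because $U_0\to\Omega^n(M)$ is epic), and Lemma \ref{lem-3.5}(2) together with Lemma \ref{lem-2.2} and the additivity of minimal injective coresolutions gives $\mod\Lambda=[T]_{n+2}$ for your $T$, hence $\extdim\Lambda\leqslant n+1$ and $\derdim\Lambda\leqslant 2n+3$.

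Part (2) has a gap at (2.3). The paper gets $\derdim\Lambda\leqslant 5$ by citing \cite[Corollary 3.16]{zheng2019extension}, which asserts $\extdim\Lambda\leqslant 2$ for all four classes; you instead route everything through the claim that each class is $1$-Igusa--Todorov, and for (2.3) that claim is not supported by your citation: for algebras with $\rad^{2n+1}\Lambda=0$ and $\Lambda/\rad^n\Lambda$ representation-finite, \cite{WJQ09F} establishes that they are $2$-Igusa--Todorov, and with only that your part (1) yields $\derdim\Lambda\leqslant 7$, not $5$. Classes (2.1), (2.2) and (2.4) are rescued by your own (correct) observation that $2$-syzygy-finiteness implies $1$-Igusa--Todorov, since monomial and left serial algebras are $2$-syzygy finite; but the class in (2.3) is not $2$-syzygy finite in general, so it genuinely needs a separate argument. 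One that works: for $N=\Omega^1(M)\subseteq\rad P_0$, both $\rad^nN$ and $N/\rad^nN$ are $\Lambda/\rad^n\Lambda$-modules (because $\rad^n(\rad^nN)\subseteq\rad^{2n+1}P_0=0$), so both lie in $\add V$ with $V$ an additive generator of $\mod\Lambda/\rad^n\Lambda$; pulling back a projective cover $Q\to N/\rad^nN$ along $N\to N/\rad^nN$ gives an exact sequence $0\to\Omega(N/\rad^nN)\to\rad^nN\oplus Q\to N\to 0$, which exhibits $\Lambda$ as $1$-Igusa--Todorov with witness $V\oplus\Omega(V)\oplus\Lambda$. Either supply this argument or, as the paper does, quote $\extdim\Lambda\leqslant 2$ directly from \cite[Corollary 3.16]{zheng2019extension}.
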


\begin{proof}
(1) If $\Lambda$ is $n$-Igusa-Todorov, then $\extdim\Lambda\leqslant n+1$ by \cite[Proposition 3.15(2)]{zheng2019extension}.
Thus $\derdim\Lambda\leqslant 2n+3$ by Corollary \ref{cor-3.15}(1).

(2) The assertion follows from \cite[Corollary 3.16]{zheng2019extension} and Corollary \ref{cor-3.15}(1).
\end{proof}


Set $$u_{1}:=2(\min\{\pd\mathcal{V}, \id\mathcal{V}\}+\ell\ell^{t_{\mathcal{V}}}(\Lambda))+1,$$
$$u_{2}:=(\min\{\pd\mathcal{V}, \id\mathcal{V}\}+2)(\ell\ell^{t_{\mathcal{V}}}(\Lambda)+1)-2.$$
Then $u_{2}-u_{1}=(\min\{\pd\mathcal{V}, \id\mathcal{V}\})(\ell\ell^{t_{\mathcal{V}}}(\Lambda)-1)-1$.
Thus $u_{2}-u_{1}\geqslant 0$ if and only if $\min\{\pd\mathcal{V},\id\mathcal{V}\}\geqslant 1$ and
$\ell\ell^{t_{\mathcal{V}}}(\Lambda)\geqslant 2$. Now, combining Corollary \ref{cor-3.15}(2.2) with
\cite[Theorem 3.12]{zheng2017upper}, we get our main result as follows.

\begin{theorem}\label{thm-3.18}
Let $\mathcal{V}$ be a subset of $\mathcal{S}$, and let $\min\{\pd\mathcal{V}, \id\mathcal{V}\}=d$
and $\ell\ell^{t_{\mathcal{V}}}(\Lambda)=n$. Then we have
$$\derdim\Lambda\leqslant
\begin{cases}
2(d+n)+1, &\mbox{if $d\geqslant 1$ and $n\geqslant 2$;}\\
(d+2)(n+1)-2, &\mbox{otherwise.}
\end{cases}$$
\end{theorem}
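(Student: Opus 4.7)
The plan is to combine the two independent upper bounds for $\derdim \Lambda$ expressed in terms of $d = \min\{\pd\mathcal{V}, \id\mathcal{V}\}$ and $n = \ell\ell^{t_{\mathcal{V}}}(\Lambda)$, and then compare them to extract the stated piecewise bound. The first bound is already in hand: Corollary \ref{cor-3.15}(2.2) gives
\[
\derdim \Lambda \leqslant 2(d+n)+1 = u_1.
\]
The second bound is imported from \cite[Theorem 3.12]{zheng2017upper}, which (as indicated in the paragraph preceding the statement) yields
\[
\derdim \Lambda \leqslant (d+2)(n+1)-2 = u_2.
\]
Since both inequalities hold simultaneously, the plan is simply to show $\derdim \Lambda \leqslant \min\{u_1,u_2\}$ and then identify the regime in which each minimum is attained.

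The comparison reduces to a short arithmetic check: compute
\[
u_2 - u_1 = (d+2)(n+1) - 2 - 2(d+n) - 1 = d(n-1) - 1.
\]
Hence $u_1 \leqslant u_2$ precisely when $d(n-1) \geqslant 1$, which — since $d,n$ are non-negative integers — is equivalent to $d \geqslant 1$ and $n \geqslant 2$. In that regime we keep $u_1 = 2(d+n)+1$ as the sharper bound. In the complementary case (namely $d = 0$, or $n \in \{0,1\}$), one has $u_2 \leqslant u_1$, and the sharper bound is $u_2 = (d+2)(n+1)-2$.

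There is no serious obstacle here: the whole argument is the comparison computation above together with a direct invocation of Corollary \ref{cor-3.15}(2.2) and \cite[Theorem 3.12]{zheng2017upper}. The only mildly delicate point is to make sure the edge cases $d = 0$ and $n \in \{0,1\}$ are handled consistently — in particular, when $n = 0$ one has $\mathfrak{F}(\mathcal{V}) = \mod \Lambda$ by Lemma \ref{lem-3.9}, so $d = \min\{\pd\mathcal{V},\id\mathcal{V}\}$ already controls the whole category — but these cases are absorbed cleanly by the formula $u_2 = (d+2)(n+1)-2$, so no separate treatment is needed.
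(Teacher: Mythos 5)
Your proposal is correct and matches the paper's argument exactly: the paper likewise combines Corollary \ref{cor-3.15}(2.2) with \cite[Theorem 3.12]{zheng2017upper} and performs the same comparison $u_2-u_1=d(n-1)-1$ in the paragraph preceding the theorem. Nothing further is needed.
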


Now we compare the upper bounds obtained in the above theorem with those known
upper bounds for $\derdim\Lambda$.

\begin{remark}\label{rem-3.19}
{\rm Keeping the notation as above, the following results have been known.
\begin{itemize}
\item[$(1)$] $\derdim\Lambda \leqslant \LL(\Lambda)-1$ (\cite[Proposition 7.37]{rouquier2006representation});
\item[$(2)$] $\derdim \Lambda \leqslant \gldim \Lambda$
(\cite[Proposition 7.4]{rouquier2006representation} and \cite[Proposition 2.6]{KrK});
\item[$(3)$] $\derdim\Lambda\leqslant (d+2)(n+1)-2$ (\cite[Theorem 3.12]{zheng2017upper}).
\end{itemize}
According to the argument before Theorem \ref{thm-3.18}, we have that when
$d\geqslant 1$ and $n\geqslant 2$, the upper bounds in Theorem \ref{thm-3.18} are
at most that in (3), with equality if $d=1$ and $n=2$; otherwise, they coincide.

It was pointed out in \cite[Remark 3.16]{zheng2017upper} that
if $\mathcal{V}=\varnothing$, then the upper bounds in (1) and (3) coincide; and if $\mathcal{V}=\mathcal{S}$,
then the upper bounds in (2) and (3) coincide. By choosing suitable $\mathcal{V}$, the upper bounds in (3) are
smaller than that in (1) and (2) and the difference may be arbitrarily large;
see \cite[Examples 4.1 and 4.2]{zheng2017upper}.}
\end{remark}




\section{Examples}

In this section, we give some examples to illustrate our results.

\begin{example} \label{exa-4.1}
{\rm Let $k$ be an algebraically closed field and $\Lambda=kQ/I$, where $Q$ is the quiver
\[\xymatrix{
&{1}\ar@(l,u)^{\alpha} \ar[rr]^{\beta} &&2 \ar@<0.5ex>[rr]^{\gamma_{1}}\ar@<-0.5ex>[rr]_{\gamma_{2}}
&& 3 \ar[rr]^{\delta}&&4 \ar[d]^{\rho_{1}}&\\
&n+4\ar@<0.5ex>[u]^{\mu_{1}} \ar@<-0.5ex>[u]_{\mu_{2}} &n+3\ar[l]_{\rho_{n}}&n+2\ar[l]_{\rho_{n-1}}
&\cdots\ar[l]_{\rho_{n-2}}&7\ar[l]_{\rho_{4}}&6\ar[l]_{\rho_{3}}&5\ar[l]_{\rho_{2}}
}\]
and $I$ is generated by $\{\alpha^{m},\alpha\beta,
\gamma_{1}\delta-\gamma_{2}\delta, \rho_{n}\mu_{1}\alpha, \rho_{n}\mu_{2}\alpha, \mu_{1}\beta-\mu_{2}\beta\}$
with $m \geqslant 4$ and $n\geqslant 1$ (note: following \cite{ASS,SY}, we concatenate the arrows from
left to right). Then the indecomposable projective $\Lambda$-modules are
$$\xymatrix@-1.0pc@C=0.1pt
{&  &  &{1}\edge[lld]\edge[dr]&& &&&&{2}\edge[ld]\edge[rd]&& & &&&&  &&&&  & &  \\
&{1}\edge[d] &&&{2}\edge[ld]\edge[rd]&& &&3\edge[rd]&& 3\edge[ld]&&& &&&3\edge[d]
&&&&&&{n+4}\edge[ld]\edge[rd]&&& \\
P(1)=&{1}\edge[d] &&3\edge[rd] &&3\edge[ld]& &P(2)=&&4\edge[d] && &&&P(3)=& &4\edge[d]
&& & & P(n+4)=&{1}\edge[d]  &  &{1} \edge[d] ,  \\
&{1}\edge[d] && &4\edge[d]&&&&&5\edge[d]&&&&&& &5\edge[d]&&& & &{1}\edge[d]
&  &{1} \edge[d] & &&& &  & \\
&\vdots\edge[d]  &&&5\edge[d]&& &&&\vdots\edge[d]&&&& &&&\vdots\edge[d]&&&&&{1}\edge[d]
&  &{1} \edge[d]& &&&&&&& &  & \\
&{1}\edge[d] &&&\vdots\edge[d] &&&&&n+2\edge[d]  & &&&& & &n+2\edge[d]&&& &&{1}\edge[d]
&  &{1} \edge[d]&&&&& &  &&&& &  &  \\
     &{1}\edge[d] &&&{n+3}\edge[d]   &&&&& {n+3}\edge[d] & && && & &{n+3}\edge[d]
     &&& &&\vdots\edge[d]  &  &\vdots \edge[d]&&&&& &  & &&& &  & \\
     &{1} &&&{n+4}\edge[ld]\edge[rd]  &&&&&{n+4}\edge[ld]\edge[rd]
     & &&&& & & {n+4}\edge[ld]\edge[rd]&&& &&{1}\edge[d]  &  &{1} \edge[d]&&&&& &  & &&& &  & \\
     &&  &{1}&&{1}&&&{1}&  &{1}&&&& &{1}&&{1}&& &&{1}  &  &{1} &&&&& &  & \\
}$$
and $P(i)=\rad P(i-1)$ for each $4\leqslant i\leqslant n+3$. It is straightforward to verify that
\begin{equation*}
\pd S(i)=
\begin{cases}
\infty, &\text{if}\;\; i=1, n+3;\\
2,&\text{if} \;\;i=2,n+4;\\
1,&\text{if} \;\; 3 \leqslant  i\leqslant n+2.
\end{cases}
\end{equation*}
Let $\mathcal{V}:=\{S(i)\mid 3 \leqslant  i\leqslant n+2\}$. Then
$\pd\mathcal{V}=1$.
Let $\mathcal{V}'$ be all the others simple modules in $\mod \Lambda$, that is,
$\mathcal{V}'=\{ S(1),S(2), S(n+3),S(n+4)\}$. Since $\La=\oplus_{i=1}^{n+4}P(i)$, we have
$$\ell\ell^{t_{\V}}(\Lambda)=\max\{\ell\ell^{t_{\V}}(P(i)) \mid 1 \leqslant i  \leqslant n+4\}$$
by \cite[Lemma 3.4(a)]{huard2013layer}.

In order to compute $\ell\ell^{t_{\V}}(P(1))$, we need to find the least non-negative integer $i$
such that $t_{\V}F_{t_{\V}}^{i}(P(1))=0$.
Since $\top P(1)=S(1)\in \add \V'$, we have $t_{\V}(P(1))=P(1)$ by \cite[Proposition 5.9(a)]{huard2013layer}.
Thus
\begin{align*}\xymatrix@-1.0pc@C=0.1pt{
&F_{t_{\V}}(P(1))=\rad t_{\V}(P(1))=\rad P(1)=T_{m-1}\oplus P(2),\\
}\end{align*}
\xymatrix@-1.0pc@C=0.05pt {
& {1}\edge[d]             \\
{\rm where}  \;\;\;T_{m-1}=  &{1}\edge[d]&({\rm the \;number\; of\; 1 \;is \;}m-1).\;\\
&\vdots\edge[d] \\
& {1}}

Since $\top T_{m-1}=S(1)\in \V'$, we have $t_{\V}(T_{m-1})=T_{m-1}$ by \cite[Proposition 5.9(a)]{huard2013layer} again.
Similarly, $t_{\V}(P(2))=P(2)$. Thus we have
\begin{align*}\xymatrix@-1.0pc@C=0.1pt{
&t_{\V}F_{t_{\V}}(P(1))= t_{\V}(T_{m-1}\oplus P(2))= t_{\V}(T_{m-1})\oplus t_{\V}( P(2))=T_{m-1}\oplus P(2),
}\end{align*}
\begin{align*}\xymatrix@-1.0pc@C=0.1pt{
&F^{2}_{t_{\V}}(P(1))&=\rad t_{\V}F_{t_{\V}}(P(1))=\rad(T_{m-1}\oplus P(2))=\rad(T_{m-1})\oplus \rad(P(2))=T_{m-2}\oplus M,%
}\end{align*}
\xymatrix@-1.0pc@C=0.05pt {
&3\edge[rd]&&3\edge[ld]\\
&& 4\edge[d]  \\
{\rm where}  \;\;\;M= && 5\edge[d]  \\
&& \vdots\edge[d]  \\
&& n+2\edge[d]  \\
&& {n+3}\edge[d]  \\
&& {n+4}\edge[ld]\edge[rd]  \\
&{1}&&{1}.}

Thus \begin{align*}\xymatrix@-1.0pc@C=0.1pt{
&t_{\V}F^{2}_{t_{\V}}(P(1))&=t_{\V}(T_{m-2}\oplus M)=t_{\V}(T_{m-2})\oplus t_{\V}(M)=T_{m-2}\oplus P(n+3)
.
}\end{align*}
Repeating the process, we get that $S(1)$ is a direct summand of $t_{\V}F^{m-1}_{t_{\V}}(P(1))$, that is, $t_{\V}F^{m-1}_{t_{\V}}(P(1))\neq 0$
and
$t_{\V}F^{m}_{t_{\V}}(P(1))=0.$ It follows that $\ell\ell^{t_{\V}}(P(1))=m$.
Similarly, we have
\begin{equation*}
\ell\ell^{t_{\V}}(P(i))=
\begin{cases}
4,&\text{if} \;\;i=2;\\
3,&\text{if} \;\;3\leqslant  i\leqslant  n+3;\\
m+1,&\text{if} \;\;i= n+4.
\end{cases}
\end{equation*}
Consequently, we conclude that
$$\ell\ell^{t_{\V}}(\La)=\max\{\ell\ell^{t_{\V}}(P(i))\mid 1 \leqslant i  \leqslant  n+4\}=m+1.$$
\begin{itemize}
\item[$(1)$] Since  $\LL(\Lambda)=n+5$ and $\gldim \Lambda=\infty$, by \cite[Example 1.6(ii)]{beligiannis2008some} we have
$$\extdim\Lambda \leqslant \LL(\Lambda)-1=\max\{m-1,n+5\}-1=\max\{m-2,n+4\}.$$
\item[$(2)$] By Corollary \ref{cor-3.15}(2.1), we have
$$\extdim \Lambda \leqslant \pd\mathcal{V}+\ell\ell^{t_{\mathcal{V}}}(\Lambda_{\Lambda})=1+(1+m)=m+2.$$
\item[$(3)$] By \cite[Theorem 3.12]{zheng2017upper}, we have
$$\derdim\Lambda \leqslant(\pd\mathcal{V} +2)(\ell\ell^{t_{\mathcal{V}}}(\Lambda)+1)-2=(1+2)(m+1+1)-2=3m+4.$$
\item[$(4)$] By Corollary \ref{cor-3.15}(2.2), we have
$$\derdim \Lambda\leqslant 2(\pd\mathcal{V} +\ell\ell^{t_{\mathcal{V}}}(\Lambda))+1=2\times(2+m)+1=2m+5.$$
\end{itemize}
Thus, it is clear that by choosing suitable $m$ and $n$, the upper bounds obtained in this paper are more precise,
even arbitrarily smaller, than that in the literature known so far.}
\end{example}


The following example shows that we may obtain the exact value of the derived dimension of some certain algebras.

\begin{example}\label{exa-4.2}
{\rm Let $k$ be an algebraically closed field and $\Lambda=kQ/I$, where $Q$ is the quiver
$$\xymatrix{
&1 \ar@(l,u)^{\alpha_{1}}
}$$
and $I$ is generated by $\{\alpha_{1}^{r}\}$ with $r\geqslant 2$.
Then the indecomposable projective (also injective) $\La$-module is
$$\xymatrix@-1.0pc@C=0.1pt
{
  &  &&&1\edge[d]\\
&&& &1\edge[d] \\
  &&&P(1)=I(1)=&1\edge[d] \\
&&& &\vdots\edge[d]\\
&&& &1.\\
&&&  & \\
}$$
It is verified directly that the injective and projective dimensions of $S(1)$ are infinite and
$\gldim \Lambda=\infty$, and that
$\Lambda$ is a self-injective algebra of finite CM-type. By \cite[Corollary 3.7]{zheng2019extension}, we have $\extdim\Lambda=0$.
It follows from Corollary \ref{cor-3.15}(1) that $\derdim\Lambda\leqslant 1$.
On the other hand, $\derdim\Lambda\geqslant 1$ by \cite{chen2008algebras}.
Thus we conclude that $\derdim\Lambda=1$.}
\end{example}

\vspace{0.5cm}

{\bf Acknowledgements.} This work was partially supported by NSFC (Grant Nos. 11971225, 12001508, 12171207).
The authors thank Hanpeng Gao for discussion of Remark \ref{rem-3.13}, and also thank the referee for useful suggestions.

\vspace{0.5cm}


\begin{thebibliography}{100}
\bibitem{ASS} I. Assem, D. Simson and A. Skowro\'nski, Elements of the Representation Theory of Associative Algebras,
Vol. 1, Techniques of Representation Theory, London Math. Soc. Stud. Texts {\bf 65}, Cambridge
Univ. Press, Cambridge, 2006.

\bibitem{A} M. Auslander, Representation Dimension of Artin Algebras, Queen Mary College Math. Notes,
Queen Mary College, London, 1971.

\bibitem{AuslanderApplications}
M.~Auslander and I.~Reiten, {\it Applications of contravariantly finite subcategories},
Adv. Math. {\bf 86} (1991), 111--152.

\bibitem{ballard2012orlov}
M.~Ballard, D.~Favero and L.~Katzarkov, {\it Orlov spectra: bounds and gaps},
Invent. Math. {\bf 189} (2012), 359--430.

\bibitem{beligiannis2008some}
A.~Beligiannis, Some ghost lemmas, survey for ``The representation dimension of artin algebras",
Bielefeld, http://www.mathematik.uni-bielefeld.de/ $\tilde{}$sek/2008/ghosts.pdf, 2008.


\bibitem{bergh2015gorenstein}
P.~A. Bergh, S.~Oppermann and D.~A. Jorgensen, {\it The Gorenstein defect category},
Quart. J. Math. {\bf 66} (2015), 459--471.

\bibitem{bondal2003generators}
A.~I. Bondal and M.~Van~den Bergh, {\it Generators and representability of functors in commutative and
noncommutative geometry}, Moscow Math. J. {\bf 3} (2003), 1--36.

\bibitem{chen2008algebras}
X.-W. Chen, Y.~Ye and P.~Zhang, {\it Algebras of derived dimension zero}, Comm. Algebra {\bf 36} (2008), 1--10.

\bibitem{dao2014radius}
H.~Dao and R.~Takahashi, {\it The radius of a subcategory of modules}, Algebra Number Theory
{\bf 8} (2014), 141--172.

\bibitem{han2009derived}
Y.~Han, {\it Derived dimensions of representation-finite algebras}, Preprint available at: arXiv:0909.0330.



\bibitem{huard2013layer}
F.~Huard, M.~Lanzilotta and O.~Mendoza Hern{\'a}ndez, {\it Layer lengths, torsion theories and the finitistic dimension},
Appl. Categ. Struct. {\bf 21} (2013), 379--392.

\bibitem{iyama2003rejective}
O.~Iyama, {\it Rejective subcategories of artin algebras and orders}, Preprint available at: arXiv:math/0311281, 2003.

\bibitem{KrK}
H. Krause and D. Kussin,
{\it Rouquier's theorem on representation dimension}, Trends in Representation Theory of Algebras and Related Topics,
Contemp. Math. {\bf 406}, Amer. Math. Soc., Providence, RI, 2006, pp.95--103.

\bibitem{oppermann2007lower1}
S.~Oppermann, {\it Lower bounds for Auslander's representation dimension}, Doctor Dissertation, Universit\"at zu K\"oln,
Preprint available at: https://folk.ntnu.no/opperman/arbeit.pdf, 2007.

\bibitem{oppermann2009lower}
S.~Oppermann, {\it Lower bounds for Auslander's representation dimension}, Duke Math. J. {\bf 148} (2009), 211--249.

\bibitem{oppermann2010representation}
S.~Oppermann, {\it Representation dimension of artin algebras},
S$\tilde{a}$o Paulo J. Math, Sci. {\bf 4} (2010), 479--498.

\bibitem{oppermann2012generating}
S.~Oppermann and J.~{\v{S}}t'ov{\'\i}{\v{c}}ek, {\it Generating the bounded derived category and perfect ghosts},
Bull. London Math. Soc. {\bf 44} (2012), 285--298.

\bibitem{orlov2009remarks}
D.~O. Orlov, {\it Remarks on generators and dimensions of triangulated categories}, Moscow Math. J. {\bf 9} (2009), 143--149.

\bibitem{psaroudakis2014homological}
C.~Psaroudakis, {\it Homological theory of recollements of abelian categories},
J. Algebra {\bf 398} (2014), 63--110.

\bibitem{rouquier2006representation}
R.~Rouquier, {\it Representation dimension of exterior algebras}, Invent. Math. {\bf 165} (2006), 357--367.

\bibitem{rouquier2008dimensions}
R.~Rouquier, {\it Dimensions of triangulated categories}, J. K-theory {\bf 1} (2008), 193--256.

\bibitem{SY} A. Skowro\'nski and K. Yamagata, Frobenius Algebras I,
Basic Representation Theory, EMS Textbooks in Math., Eur. Math. Soc., Z\"urich, 2011.

\bibitem{WJQ09F}
J.~Wei, \emph{Finitistic dimension and Igusa-Todorov algebras}, Adv. Math. \textbf{222} (2009), 2215--2226.

\bibitem{zheng2017upper}
J.~Zheng and Z.~Huang, {\it An upper bound for the dimension of bounded derived categories},
J. Algebra {\bf 556} (2020), 1211--1228.

\bibitem{zheng2019extension}
J.~Zheng, X.~Ma and Z.~Huang, {\it The extension dimension of abelian categories}, Algebr. Represent. Theory
{\bf 23} (2020), 693--713.

\end{thebibliography}
\end{document}